\numberwithin{equation}{section}
\newtheorem{Theorem}{Theorem}[section]
\newtheorem{Proposition}[Theorem]{Proposition}
\newtheorem{cor}[Theorem]{Corollary}
\theoremstyle{remark}
\begin{document}

\title{A note on approximately biflat Banach algebras}

\author[A. Sahami]{A. Sahami}

\address{Faculty of Mathematics and Computer Science,
Amirkabir University of Technology, 424 Hafez Avenue, 15914 Tehran,
Iran.}

\email{amir.sahami@aut.ac.ir}

\keywords{Approximate biflatness, Triangular Banach algebras,Segal
algebras, semigroup algebras.}

\subjclass[2010]{Primary 43A07, 20M18, Secondary 46H05.}

\maketitle
%-------------------------------------------------------------

%%%%%%%%%%%%%%%%%%%%%%%%%%%%%%%%%%%%%%%%%%%%%%%%%%%%%%%%%%%%%%%%%%%%%%%%%
\begin{abstract}
In this paper, we study the notion of approximately biflat Banach
algebras for second dual Banach algebras and semigroup algebras. We
show that for a locally compact group $G$, if $S(G)^{**}$ is
approximately biflat, then $G$ is amenable group. Also we give some
conditions which the second dual of a Triangular Banach algebra is
never approximately biflat. For a uniformly locally finite semigroup
$S$, we show that $\ell^{1}(S)$ is approximately biflat if and only
if $\ell^{1}(S)$ is biflat.
\end{abstract}
\section{Introduction}
Helemskii defined the notion of biflat Banach algebras. In fact a
Banach algebra $A$ is biflat if there exists a bounded $A$-bimodule
$\rho:(A\otimes_{p}A)^{*}\rightarrow A^{*}$ such that
$\rho\circ\pi^{*}_{A}(f)=f$ for each $f\in A^{*}$, where
$\pi_{A}:A\otimes_{p}A\rightarrow A$ is defined by $\pi_{A}(a\otimes
b)=ab$ for each $a,b\in A.$ For a group algebra $L^{1}(G)$
associated with a locally compact group $G$, $L^{1}(G)$ is biflat if
and only if $G$ is amenable group. For the further details see
\cite{hel}. Recently Ramsden in \cite{rams} characterized the
biflatness os semigroup algebras associated to a locally finite
inverse semigroup. He showed that for a  locally finite inverse
semigroup $S$, $\ell^{1}(S)$ is biflat if and only if each $G_{p}$
is amenable group, where $p\in E(S)$ and $G_{p}$ is a maximal
subgroup of $S$. Also biflatness of Triangular Banach algebras have
been studied in \cite{sat}.\\
Approximate notions in the homology of  Banach algebras introduced
 and have been under more observations, recently. A Banach algebra
 $A$ is approximately biflat if there exists a a net of $A$-bimodule
 morphisms $(\rho_{\alpha})$ from $(A\otimes_{p}A)^{*}$  into
 $A^{*}$ such that
 $\rho_{\alpha}\circ\pi^{*}_{A}\xrightarrow{W^{*}OT}id_{A^{*}},$ where
$W^{*}OT$ is denoted for the weak-star operator topology and
$id_{A^{*}}$ the identity map on $A^{*}.$ In fact for the discrete
Heisenberg group $G$ the Fourier algebra $A(G)$ is approximately
biflat but $A(G)$ is not biflat, see \cite{sam}. Samei {\it et al.}
also showed that if $A$ is an approximately biflat Banach algebra
with an approximate identity, then $A$ is pseudo-amenable, that is,
there exists a net $(m_{\alpha})$ in $A\otimes_{p}A$ such that
$$a\cdot m_{\alpha}-m_{\alpha}\cdot a,\quad
\pi_{A}(m_{\alpha})a\rightarrow a\quad (a\in A).$$ For further
details about pseudo-amenability, see \cite{ghah pse}.

Motivated by these considerations, we study approximate biflatness
of $\ell^{1}(S)$, where $S$ is a uniformly locally finite semigroup.
We show that $\ell^{1}(S)$ is approximately biflat if and only if
$\ell^{1}(S)$ is biflat. Also we show that  approximately biflatness
of  $\ell^{1}(S)^{**}$ implies the pseudo-amenability of
$\ell^{1}(S).$ Also for a locally compact group $G$, we show that
approximately biflatness of $S(G)^{**}$, implies that $G$ is
amenable, where $S(G)$ is a Segal algebra with respect to $G$.
Finally we give a criteria to study approximately biflatness of
Triangular Banach algebras. We show that some second dual of
Triangular Banach algebras related to a locally compact groups are
never approximately biflat.
%------------------------------------------------------------------------------------------------------------------------------------------
%%%%%%%%%%%%%%%%%%%%%%%%%%%%%%%%%%%%%%%%%%%%%%%%%%%%%%%%%%%%%%%%%%%%%%%%%%%%%%%%%%%%%%%%%%%%%%%%%%%%%%%%%%%%%%%%%%%%%%%%%%%%%%%%%%%%%%%%%%%
%------------------------------------------------------------------------------------------------------------------------------------------
\section{Preliminaries}
Let $A$ be a Banach algebra. We recall that if $X$ is a Banach
$A$-bimodule, then  $X^{*}$ is also a Banach $A$-bimodule via  the
following actions
$$(a\cdot f)(x)=f(x\cdot a) ,\hspace{.25cm}(f\cdot a)(x)=f(a\cdot x ) \hspace{.5cm}(a\in A,x\in X,f\in X^{*}). $$

Throughout, the character space of $A$ is denoted by $\Delta(A)$,
that is, all non-zero multiplicative linear functionals on $A$. Let
$\phi\in \Delta(A)$. Then $\phi$ has a unique extension
$\tilde{\phi}\in\Delta(A^{**})$
 which is defined by $\tilde{\phi}(F)=F(\phi)$ for every
$F\in A^{**}$.

Let $\{A_{\alpha}\}_{\alpha\in \Gamma}$ be a collection of Banach
algebras. Then we define the  $\ell^{1}$-direct sum of $A_{\alpha}$
by  $$\ell^{1}-\oplus_{\alpha \in \Gamma}
A_{\alpha}=\{(a_{\alpha})\in \prod_{\alpha\in
\Gamma}A_{\alpha}:\sum_{\alpha\in\Gamma}||a_{\alpha}||<\infty\}.$$
It is easy to verify that
$$\Delta(\ell^{1}-\oplus_{\alpha\in\Gamma} A_{\alpha})=\{\oplus\phi_{\beta}:\phi_{\beta}\in\Delta(A_{\beta}),\beta\in\Gamma\},$$
where $\oplus\phi_{\beta}((a_{\alpha})_{\alpha\in
\Gamma})=\phi_{\beta}(a_{\beta})$ for every $(a_{\alpha})_{\alpha\in
\Gamma}\in \ell^{1}-\oplus_{\alpha \in \Gamma} A_{\alpha}$ and every
$\beta\in\Gamma$.

Let $A$ be a Banach algebra and  let $\Lambda$ be a non-empty set.
The set of all $\Lambda\times\Lambda$ matrixes $(a_{i,j})_{i,j}$
which entries come from $A$ is denoted by $\mathbb{M}_{\Lambda}(A)$.
With the matrix multiplication and the following norm
$$||(a_{i,j})_{i,j}||=\sum_{i,j}||a_{i,j}||<\infty,$$
$\mathbb{M}_{\Lambda}(A)$ is a Banach algebra.
$\mathbb{M}_{\Lambda}(A)$ belongs to the class of $\ell^{1}$-Munn
algebras.  The map $\theta:\mathbb{M}_{\Lambda}(A)\rightarrow
A\otimes_{p} \mathbb{M}_{\Lambda}(\mathbb{C})$ defined by
$\theta((a_{i,j}))=\sum_{i,j}a_{i,j}\otimes E_{i,j}$ is an isometric
algebra isomorphism, where $(E_{i,j})$ denotes the matrix unit of
$\mathbb{M}_{\Lambda}(\mathbb{C})$.
 Also it is well-known that $
\mathbb{M}_{\Lambda}(\mathbb{C})$ is a  biprojective Banach algebra
\cite[Proposition 2.7]{rams}.

The main  reference for the semigroup theory is \cite{how}. Let $S$
be a semigroup and let $E(S)$ be the set of its idempotents. A
partial order on $E(S)$ is defined by
$$s\leq t\Longleftrightarrow s=st=ts\quad (s,t\in E(S)).$$
If $S$ is an inverse semigroup, then there exists a partial order on
$S$ which is coincide with the partial order on $E(S)$. Indeed
$$s\leq t\Longleftrightarrow s=ss^{*}t\quad (s,t\in
S).$$ For every  $x\in S$, we denote $(x]=\{y\in S|\,y\leq x\}$. $S$
is called locally finite (uniformly locally finite) if for each
$x\in S$, $|(x]|<\infty\,\,(\sup\{|(x]|\,:\,x\in S\}<\infty)$,
respectively.

Suppose that $S$ is an inverse semigroup. Then  the maximal subgroup
of $S$ at $p\in E(S)$ is denoted by $G_{p}=\{s\in
S|ss^{*}=s^{*}s=p\}$. For an inverse semigroup $S$ there exists a
relation $\mathfrak{D}$ such that $s\mathfrak{D}t$ if and only if
there exists $x\in S$ such that $ss^{*}=xx^{*}$ and $t^{*}t=x^{*}x$.
We denote $\{\mathfrak{D}_{\lambda}:\lambda\in \Lambda\}$ for the
collection of $\mathfrak{D}$-classes and
$E(\mathfrak{D}_{\lambda})=E(S)\cap \mathfrak{D}_{\lambda}.$ An
inverse semigroup $S$ is called Clifford if for each $s\in S$, there
exists $s^{*}\in S$ such that $ss^{*}=s^{*}s.$
%------------------------------------------------------------------------------------------------------------------------------------------
%%%%%%%%%%%%%%%%%%%%%%%%%%%%%%%%%%%%%%%%%%%%%%%%%%%%%%%%%%%%%%%%%%%%%%%%%%%%%%%%%%%%%%%%%%%%%%%%%%%%%%%%%%%%%%%%%%%%%%%%%%%%%%%%%%%%%%%%%%%
%------------------------------------------------------------------------------------------------------------------------------------------
\section{Approximate biflatness of second  dual of Banach algebras}
In this section we investigate approximate biflatness dual Banach
algebras.
\begin{Proposition}\label{net}
Let $A$ be a Banach algebra. $A$ is approximately biflat Banach
algebra if and only if there exists a net $(\rho_{\alpha})$ of
bounded $A$-bimodule morphisms from $A$ into $(A\otimes_{p}A)^{**}$
such that $\pi^{**}_{A}\circ\rho_{\alpha}(a)\rightarrow a$ for every
$a\in A.$
\end{Proposition}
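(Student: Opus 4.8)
The statement is an equivalence between two ways of packaging "approximate biflatness": one via maps $\rho_\alpha\colon(A\otimes_p A)^{*}\to A^{*}$ with $\rho_\alpha\circ\pi_A^{*}\xrightarrow{W^{*}OT}\mathrm{id}_{A^{*}}$, and one via maps $\rho_\alpha\colon A\to(A\otimes_p A)^{**}$ with $\pi_A^{**}\circ\rho_\alpha(a)\to a$. The natural bridge is Banach-space duality together with the identification of $A$-bimodule morphisms on $X^{*}$ with $A$-bimodule morphisms into $X^{*}$ after transposing. So the plan is: given a bounded $A$-bimodule morphism $T\colon(A\otimes_p A)^{*}\to A^{*}$, pass to its adjoint $T^{*}\colon A^{**}\to(A\otimes_p A)^{**}$ and restrict to $A$ via the canonical embedding $\iota_A\colon A\hookrightarrow A^{**}$, obtaining $\rho:=T^{*}\circ\iota_A\colon A\to(A\otimes_p A)^{**}$; conversely, given such a $\rho$, one recovers $T$ (at least on the image that matters) from $\rho^{*}$ restricted along the embedding of $(A\otimes_p A)^{*}$ into $(A\otimes_p A)^{***}$ composed with $\rho^{*}$, landing in $A^{*}$. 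The first step is to check that these operations carry $A$-bimodule morphisms to $A$-bimodule morphisms and preserve boundedness (uniformly in $\alpha$ if one keeps track of norms, though the statement as phrased does not demand a uniform bound).

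\smallskip

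The key computational identity is the compatibility of the adjoints: $\pi_A^{**}\circ T^{*}=(T\circ\pi_A^{*})^{*}$ as maps $A^{**}\to A^{**}$. Hence, for $a\in A$ viewed inside $A^{**}$,
$$
\pi_A^{**}\circ\rho_\alpha(a)=\pi_A^{**}\circ T_\alpha^{*}\circ\iota_A(a)=(T_\alpha\circ\pi_A^{*})^{*}\circ\iota_A(a).
$$
Now I would unwind what $(T_\alpha\circ\pi_A^{*})^{*}\circ\iota_A(a)$ is: it is the functional on $A^{*}$ sending $f\mapsto \langle f, (T_\alpha\circ\pi_A^{*})(a)\rangle$... wait, more carefully, $\langle (T_\alpha\circ\pi_A^{*})^{*}(\iota_A a), f\rangle=\langle \iota_A a, (T_\alpha\circ\pi_A^{*})(f)\rangle=\langle (T_\alpha\circ\pi_A^{*})(f), a\rangle$ for $f\in A^{*}$. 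The hypothesis $\rho_\alpha\circ\pi_A^{*}\xrightarrow{W^{*}OT}\mathrm{id}_{A^{*}}$ means precisely that $\langle (T_\alpha\circ\pi_A^{*})(f), a\rangle\to\langle f,a\rangle$ for every $f\in A^{*}$ and $a\in A$, i.e.\ $(T_\alpha\circ\pi_A^{*})(f)\to f$ in the weak-$*$ topology of $A^{*}$. Reading the display backwards, this says exactly that $\pi_A^{**}\circ\rho_\alpha(a)\to\iota_A(a)$ in the weak-$*$ topology of $A^{**}$ for each $a$. I would then point out that "$\to a$" in the statement is to be read in this weak-$*$ sense (or, if the paper intends norm convergence, one either restricts attention to that reading or invokes a Mazur-type convexity argument to pass from a net converging weak-$*$ to a net of convex combinations converging in norm, noting that convex combinations of $A$-bimodule morphisms are again $A$-bimodule morphisms). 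The converse direction is the same chain of equalities read in the other direction, using that $\rho^{*}$ restricted to $(A\otimes_p A)^{*}\subseteq(A\otimes_p A)^{***}$ via the canonical map, followed by the natural projection $A^{***}\to A^{*}$ (or directly composing with $\iota_{A^*}$-adjoints), produces the desired $T$.

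\smallskip

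The main obstacle I anticipate is bookkeeping the canonical embeddings and the direction of the various weak-$*$ topologies correctly—in particular making sure that "$W^{*}OT$ convergence of $\rho_\alpha\circ\pi_A^{*}$ to $\mathrm{id}_{A^{*}}$" translates into exactly the pointwise (weak-$*$) convergence $\pi_A^{**}\circ\rho_\alpha(a)\to a$ and not into some strictly stronger or weaker statement, and verifying that the transpose construction genuinely lands in $(A\otimes_p A)^{**}$ and respects the bimodule structure on both sides (the left and right $A$-actions on $(A\otimes_p A)^{*}$ dualize to the actions on $(A\otimes_p A)^{**}$ in the way the definition of "$A$-bimodule morphism from $A$ into $(A\otimes_p A)^{**}$" requires). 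Once the dictionary between $\mathrm{Hom}_{A\text{-}A}\big((A\otimes_p A)^{*},A^{*}\big)$ and $\mathrm{Hom}_{A\text{-}A}\big(A,(A\otimes_p A)^{**}\big)$ is set up via transposition, the equivalence of the two convergence conditions is a one-line diagram chase, so essentially all the work is in the setup rather than in any estimate.
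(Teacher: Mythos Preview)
Your approach is essentially the paper's: take the adjoint of each $\xi_\alpha\colon(A\otimes_p A)^*\to A^*$ and restrict to $A\subseteq A^{**}$ to obtain $\rho_\alpha$, and conversely restrict $\rho_\alpha^{*}$ to $(A\otimes_p A)^*\subseteq(A\otimes_p A)^{***}$; the identity $\pi_A^{**}\circ\xi_\alpha^{*}=(\xi_\alpha\circ\pi_A^{*})^{*}$ then converts the $W^{*}OT$ hypothesis into the pointwise convergence in the statement, exactly as you unwind it. Your attention to the topology is justified: the paper's own computation, despite being written with norm signs, in fact only establishes weak-$*$ convergence of $\pi_A^{**}\circ\rho_\alpha(a)$ to $a$ in $A^{**}$ (the bound $\leq\|a\|\,\|\xi_\alpha\circ\pi_A^{*}(f)-f\|$ does not go to zero uniformly in $f$ from the $W^{*}OT$ hypothesis alone), and weak-$*$ convergence is precisely what is used downstream.
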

\begin{proof}
Let $A$ be approximately biflat. Then there exists a net
$\xi_{\alpha}:A^{*}\rightarrow (A\otimes_{p}A)^{*}$ such that
$\xi_{\alpha}\circ\pi^{*}_{A}(f)-f\rightarrow 0,$ for every $f\in
A^{*}$. Set $\rho_{\alpha}=\xi^{*}_{\alpha}$, hence for each $a\in
A$ and $f\in A^{*}$ with $||f||\leq 1$, we have
\begin{equation*}
\begin{split}
||\pi^{**}_{A}\circ\rho_{\alpha}(a)-a||=||\pi^{**}_{A}\circ\xi^{*}_{\alpha}(a)-a||&=||(\xi_{\alpha}\circ\pi^{*}_{A})^{*}(a)-a||\\
&=||(\xi_{\alpha}\circ\pi^{*}_{A})^{*}(a)(f) -a(f)||\\
&=||a(\xi_{\alpha}\circ\pi^{*}_{A}(f))-a(f)||\\
&=||a(\xi_{\alpha}\circ\pi^{*}_{A}(f)-f)||\\
&\leq ||a||||\xi_{\alpha}\circ\pi^{*}_{A}(f)-f||\rightarrow 0.
\end{split}
\end{equation*}
For converse, suppose that there exists a net $(\rho_{\alpha})$ of
bounded $A$-bimodule morphisms from $A$ into $(A\otimes_{p}A)^{**}$
such that $\pi^{**}_{A}\circ\rho_{\alpha}(a)\rightarrow a$ for every
$a\in A.$ Set
$\xi_{\alpha}=\rho_{\alpha}^{*}|_{(A\otimes_{p}A)^{*}}$.
\begin{equation*}
\begin{split}
\xi_{\alpha}\circ\pi^{*}_{A}(f)(a)-f(a)=f(\pi^{**}_{A}\circ\rho_{\alpha}(a)-a)\rightarrow
0,
\end{split}
\end{equation*}
where $f\in A^{*}, a\in A.$
\end{proof}

\begin{Theorem}\label{dual}
Let $A$ be a Banach algebra with an approximate identity. If
$A^{**}$ is approximately biflat, then $A$ is pseudo-amenable.
\end{Theorem}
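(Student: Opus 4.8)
The plan is to first transfer approximate biflatness from $A^{**}$ down to $A$ --- a statement analogous to the classical fact that amenability of $A^{**}$ descends to $A$ --- and then to quote the theorem of Samei \emph{et al.} recalled in the introduction. Throughout, $\kappa_{X}\colon X\to X^{**}$ denotes the canonical embedding of a Banach space $X$, and $A^{**}$ carries the first Arens product. Applying Proposition \ref{net} to the Banach algebra $A^{**}$, the hypothesis produces a net $(\rho_{\alpha})$ of bounded $A^{**}$-bimodule morphisms $\rho_{\alpha}\colon A^{**}\to(A^{**}\otimes_{p}A^{**})^{**}$ such that $\pi^{**}_{A^{**}}\circ\rho_{\alpha}(F)\to F$ for every $F\in A^{**}$.

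The second ingredient is a pair of canonical maps. Let $\Phi\colon A^{**}\otimes_{p}A^{**}\to(A\otimes_{p}A)^{**}$ be the contraction obtained by linearising the first Arens extension of the canonical bilinear map $A\times A\to A\otimes_{p}A$, $(a,b)\mapsto a\otimes b$. It is standard that $\Phi$ is an $A$-bimodule morphism for the natural bidual module structure on $(A\otimes_{p}A)^{**}$, and that, because composing $(a,b)\mapsto a\otimes b$ with $\pi_{A}$ gives the multiplication of $A$, the map $\Phi$ intertwines the multiplications: $\pi^{**}_{A}\circ\Phi=\pi_{A^{**}}$, whence $\pi^{****}_{A}\circ\Phi^{**}=\pi^{**}_{A^{**}}$ after taking second transposes. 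Let $P\colon(A\otimes_{p}A)^{****}\to(A\otimes_{p}A)^{**}$ and $Q\colon A^{****}\to A^{**}$ be the canonical norm-one projections (the transposes of $\kappa_{(A\otimes_{p}A)^{*}}$ and $\kappa_{A^{*}}$); they are $A$-bimodule morphisms, they split $\kappa_{(A\otimes_{p}A)^{**}}$ and $\kappa_{A^{**}}$, and a routine weak-$*$ density argument gives $\pi^{**}_{A}\circ P=Q\circ\pi^{****}_{A}$. Now set $\sigma_{\alpha}:=P\circ\Phi^{**}\circ\rho_{\alpha}\circ\kappa_{A}\colon A\to(A\otimes_{p}A)^{**}$.

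Being a composition of bounded $A$-bimodule morphisms, each $\sigma_{\alpha}$ is a bounded $A$-bimodule morphism, and for $a\in A$
\begin{align*}
\pi^{**}_{A}\circ\sigma_{\alpha}(a)&=\pi^{**}_{A}\circ P\circ\Phi^{**}\circ\rho_{\alpha}(\kappa_{A}a)=Q\circ\pi^{****}_{A}\circ\Phi^{**}\circ\rho_{\alpha}(\kappa_{A}a)\\
&=Q\bigl(\pi^{**}_{A^{**}}\circ\rho_{\alpha}(\kappa_{A}a)\bigr)\longrightarrow Q(\kappa_{A^{**}}\kappa_{A}a)=\kappa_{A}a,
\end{align*}
where the convergence uses Proposition \ref{net} applied to $A^{**}$ (with $F=\kappa_{A}a$) together with the continuity of $Q$. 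Thus $\pi^{**}_{A}\circ\sigma_{\alpha}(a)\to a$ in $A^{**}$, and Proposition \ref{net}, now read for $A$, shows that $A$ is approximately biflat. Since $A$ has an approximate identity, the theorem of Samei \emph{et al.} quoted in the introduction gives that $A$ is pseudo-amenable, which completes the proof.

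The step I expect to be the genuine obstacle is the Arens-product bookkeeping for $\Phi$: with a fixed, consistent choice of Arens product one must check both that $\Phi$ intertwines the multiplications and that it is a two-sided $A$-module morphism into $(A\otimes_{p}A)^{**}$ with its natural bidual action (and, for the cleanest statement, that this is compatible with the $A^{**}$-actions). The remaining points --- the elementary properties of $P$ and $Q$ and their compatibility with $\pi_{A}$, and the two applications of Proposition \ref{net} --- are routine.
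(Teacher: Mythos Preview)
Your argument is correct and is a genuinely different---and cleaner---route than the paper's. Both proofs begin with Proposition \ref{net} for $A^{**}$ and both use the Ghahramani--Loy--Willis map $\psi\colon A^{**}\otimes_{p}A^{**}\to(A\otimes_{p}A)^{**}$ (your $\Phi$); this is exactly the map cited in \cite[Lemma 1.7]{gha loy}, so the ``Arens bookkeeping'' you flag as the delicate step is already available in the literature with the properties you need. From there, however, the paper evaluates the $A^{**}$-bimodule maps on an approximate identity, takes an iterated limit to produce a candidate net in $(A^{**}\otimes_{p}A^{**})^{**}$, pushes it down through $\psi^{**}$, and then runs two Mazur-type convexity arguments to descend first from $(A\otimes_{p}A)^{****}$ to $(A\otimes_{p}A)^{**}$ and then to $A\otimes_{p}A$, arriving at pseudo-amenability directly. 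Your approach instead composes $\rho_{\alpha}$ with $\Phi^{**}$ and the canonical projection $P$ to obtain bona fide $A$-bimodule maps $\sigma_{\alpha}\colon A\to(A\otimes_{p}A)^{**}$ satisfying the hypothesis of Proposition \ref{net}, thereby proving the strictly stronger intermediate statement that $A$ itself is approximately biflat; pseudo-amenability then follows from the theorem of Samei \emph{et al.}\ quoted in the introduction. Your route is shorter and more conceptual, and it isolates a permanence result (approximate biflatness descends from $A^{**}$ to $A$) of independent interest; the paper's route is more self-contained in that it does not invoke \cite[Theorem 2.4]{sam} as a black box. One small remark: the convergence supplied by Proposition \ref{net} is really $w^{*}$-convergence in $A^{****}$, so when you pass through $Q$ you should note that $Q=(\kappa_{A^{*}})^{*}$ is an adjoint and hence $w^{*}$-$w^{*}$ continuous, not merely norm continuous; with that adjustment everything goes through.
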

\begin{proof}
Suppose that $A^{**}$ is approximately biflat. Then by Proposition
\ref{net} there exists a net $(\rho_{\alpha})_{\alpha\in I}$ of
$A^{**}$-bimodule morphism from $A^{**}$ into
$(A^{**}\otimes_{p}A^{**})^{**}$ such that
$\pi_{A^{**}}^{**}\circ\rho_{\alpha}(a)\rightarrow a$ for each $a\in
A^{**}.$ It is easy to see that the net $(\rho_{\alpha}|_{A})$
 is also a net of $A$-bimodule morphism satisfies $\pi_{A^{**}}^{**}\circ\rho_{\alpha}|_{A}(a)\rightarrow a$ for each
$a\in A.$ We denote $(e_{\lambda})_{\lambda\in J}$ for the
approximate identity of $A.$ Consider
$$\lim_{\alpha}\lim_{\lambda}a\cdot \rho_{\alpha}(e_{\lambda})-\rho_{\alpha}(e_{\lambda})\cdot a=\lim_{\alpha}\lim_{\lambda}\rho_{\alpha}(ae_{\lambda}-e_{\lambda}a)
=\lim_{\alpha}\rho_{\alpha}(0)=0\quad (a\in A).$$ Also
$$\lim_{\alpha}\lim_{\lambda}\pi_{A^{**}}^{**}\circ\rho_{\alpha}(e_{\lambda})a-a=\lim_{\alpha}\pi_{A^{**}}^{**}\circ\rho_{\alpha}(a)-a=0,\quad (a\in A).$$
Let $E=I\times J^{I}$ be a directed set with product ordering, that
is $$(\alpha,\beta)\leq_{E} (\alpha^{'},\beta^{'})\Leftrightarrow
\alpha\leq_{I} \alpha^{'}, \beta\leq_{J^{I}}\beta^{'}\qquad
(\alpha,\alpha^{'}\in I,\quad \beta,\beta^{'}\in J^{I}),$$ where
$J^{I}$ is the set of all functions from $I$ into $J$ and $
\beta\leq_{J^{I}}\beta^{'}$ means that $\beta(d)\leq_{J}
\beta^{'}(d)$ for each $d\in I$. Suppose that
$\gamma=(\alpha,\beta_{\alpha})\in E$ and
$m_{\gamma}=\rho_{\alpha}(e_{\lambda_{\alpha}})\in
(A^{**}\otimes_{p}A^{**})^{**}.$ Applying iterated limit theorem
\cite[page 69]{kel} and above calculations, we can easily see that
$$a\cdot m_{\gamma}-m_{\gamma}\cdot a\rightarrow 0,\quad
\pi_{A^{**}}^{**}(m_{\gamma})a\rightarrow a,\quad(a\in A).$$
 There exists a
bounded linear map $\psi:A^{**}\otimes_{p} A^{**}\rightarrow
(A\otimes_{p} A)^{**}$ such that for $a,b\in A$ and $m\in
A^{**}\otimes_{p} A^{**}$, the following holds;
\begin{enumerate}
\item [(i)] $\psi(a\otimes b)=a\otimes b $,
\item [(ii)] $\psi(m)\cdot a=\psi(m\cdot a)$,\qquad
$a\cdot\psi(m)=\psi(a\cdot m),$
\item [(iii)] $\pi_{A}^{**}(\psi(m))=\pi_{A^{**}}(m),$
\end{enumerate}
see \cite[Lemma 1.7]{gha loy}. So $\psi^{**}(m_{\gamma})$ is a net
in $(A\otimes_{p}A)^{****}$ such that
$$a\cdot\psi^{**}(m_{\gamma})-\psi^{**}(m_{\gamma})\cdot a\rightarrow 0,\quad \pi_{A}^{****}(\psi^{**}(m_{\gamma}))a
=\pi^{**}_{A^{**}}(m_{\gamma})a\rightarrow a\qquad(a\in A).$$ Put
$n_{\gamma}=\psi^{**}(m_{\gamma}).$ Suppose that $\epsilon>0$ and
$F=\{a_{1},...,a_{r}\}\subseteq A$. Set
\begin{equation*}
\begin{split}
V=&\{(a_{1} \cdot n-n\cdot a_{1},..., a_{r} \cdot n-n\cdot a_{r},\pi_{A}^{**}(n)a_{1}-a_{1},...,\pi_{A}^{**}(n)a_{r}-a_{r})|n\in (A\otimes_{p}A)^{**}\}\\
&\subseteq (\prod^{r}_{i=1}(A\otimes_{p}A)^{**})\oplus_{1}
(\prod^{r}_{i=1}A^{**}).
\end{split}
\end{equation*}
It is easy to see that $(0,0,...,0)$ is a $w$-limit point of $V$.
Since $V$ is convex set $\overline{V}^{||\cdot||}=\overline{V}^{w}$,
then $(0,0,...,0)$ is a $||\cdot||$-limit point of $V$. Hence there
exists a net $(n_{(F,\epsilon)})$ in $(A\otimes_{p}A)^{**}$ such
that
$$||a_{i}\cdot n_{(F,\epsilon)}-n_{(F,\epsilon)}\cdot a_{i}||<\epsilon,\quad||\pi_{A}^{**}(n_{(F,\epsilon)})a_{i}-a_{i}||<\epsilon,\quad (i\in\{1,2,..., r\}).$$
 Observe that
$$\Delta=\{(F,\epsilon):F {\hbox{ is a finite subset of }}A, \epsilon>0\},$$ with the following order
$$(F,\epsilon)\leq (F^{\prime},\epsilon^{\prime})\Longrightarrow F\subseteq F^{\prime},\quad \epsilon\geq \epsilon^{\prime}$$
is a directed set. It is easy to see that there exists a net
$(n_{(F,\epsilon)})_{(F,\epsilon)\in \Delta}$  in
$(A\otimes_{p}A)^{**}$ such that
$$a \cdot n_{(F,\epsilon)}-n_{(F,\epsilon)}\cdot a \rightarrow 0,\quad \pi_{A}^{**}(n_{(F,\epsilon)})a-a\rightarrow 0,$$
for every $a\in A$. Using the same method as above we can assume
that $(n_{(F,\epsilon)})_{(F,\epsilon)\in \Delta}$ is a subset of
$A\otimes_{p}A$. This means that $A$ is pseudo-amenable.
\end{proof}
Let $A$ be a Banach algebra and $\phi\in\Delta(A)$. We say that $A$
is approximately $\phi$-inner amenable if there exists a net
$(a_{\alpha})_{\alpha}$ in $A$ such that
$aa_{\alpha}-aa_{\alpha}a\rightarrow 0$ and
$\phi(a_{\alpha})\rightarrow 1,$ for all $a\in A.$ Also $A$ is
approximately left $\phi-$amenable if there exists a net
$m_{\alpha}$ in $A$ such that
$am_{\alpha}-\phi(a)m_{\alpha}\rightarrow 0$ and
$\phi(m_{\alpha})\rightarrow 1$, see \cite{agha}.

The proof of following two results are similar to the proof of
Theorem \ref{dual} which we omit it.
\begin{Theorem}
Suppose that $A$ is an approximately $\phi$-inner amenable Banach
algebra. If $A^{**}$ is approximately biflat, then $A$ is
approximately left $\phi$-amenable.
\end{Theorem}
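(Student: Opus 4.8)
The plan is to mimic the proof of Theorem~\ref{dual}, feeding the $\phi$-inner-amenable net into the approximate splitting of $\pi$ in place of the approximate identity, and then collapsing one tensor factor along $\phi$. Write $B=A^{**}$. First I would apply Proposition~\ref{net} to $B$: since $B$ is approximately biflat there is a net $(\rho_\beta)$ of bounded $B$-bimodule morphisms $\rho_\beta:B\to(B\otimes_p B)^{**}$ with $\pi_B^{**}\circ\rho_\beta(x)\to x$ (in norm) for every $x\in B$. Then, letting $(a_\alpha)$ be a net in $A$ with $aa_\alpha-a_\alpha a\to 0$ and $\phi(a_\alpha)\to 1$ — which I take bounded — I pass to a weak$^*$-cluster point $E\in A^{**}=B$ of $(a_\alpha)$. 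One checks readily that $\widetilde\phi(E)=E(\phi)=1$ and $a\cdot E=E\cdot a$ for all $a\in A$ (the latter because, for a subnet with $a_{\alpha_j}\to E$ weak$^*$, the Arens products $a\cdot E$ and $E\cdot a$ are the weak$^*$-limits of $aa_{\alpha_j}$ and $a_{\alpha_j}a$, whose difference is norm-null). Put $m_\beta=\rho_\beta(E)\in(B\otimes_p B)^{**}$. Since $\rho_\beta$ is a $B$-bimodule morphism, $a\cdot m_\beta-m_\beta\cdot a=\rho_\beta(a\cdot E-E\cdot a)=0$ for $a\in A$; and since $\widetilde{\widetilde\phi}\circ\pi_B^{**}$ (with $\widetilde{\widetilde\phi}$ the canonical extension of $\widetilde\phi$ to $B^{**}$) is the functional $m\mapsto\langle m,\widetilde\phi\otimes\widetilde\phi\rangle$, norm convergence gives $\langle m_\beta,\widetilde\phi\otimes\widetilde\phi\rangle=\widetilde{\widetilde\phi}\bigl(\pi_B^{**}\rho_\beta(E)\bigr)\to\widetilde{\widetilde\phi}(E)=1$.

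Next I would collapse along $\phi$. Let $S=\mathrm{id}_B\otimes\widetilde\phi:B\otimes_p B\to B$, a contractive left $B$-module morphism with $S(w\cdot F)=\widetilde\phi(F)S(w)$ for $F\in B$ and $S^*(\widetilde\phi)=\widetilde\phi\otimes\widetilde\phi$, and set $\mu_\beta=S^{**}(m_\beta)\in B^{**}=A^{****}$. Transporting the module identities for $S$ through the double adjoint and using $a\cdot m_\beta=m_\beta\cdot a$ from the previous step, I get for $a\in A$ that $a\cdot\mu_\beta-\phi(a)\mu_\beta=S^{**}(a\cdot m_\beta)-\phi(a)S^{**}(m_\beta)=S^{**}(m_\beta\cdot a)-\phi(a)S^{**}(m_\beta)=0$, while $\widetilde{\widetilde\phi}(\mu_\beta)=\langle m_\beta,S^*(\widetilde\phi)\rangle=\langle m_\beta,\widetilde\phi\otimes\widetilde\phi\rangle\to 1$. (Equivalently one could route $m_\beta$ through the map $\psi$ of \cite[Lemma 1.7]{gha loy} exactly as in Theorem~\ref{dual} and then apply $(\mathrm{id}_A\otimes\phi)^{****}$, arriving at the same $\mu_\beta$.)

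Finally I would descend to $A$ using the Hahn--Banach/convexity device from Theorem~\ref{dual}. For a finite set $F=\{a_1,\dots,a_r\}\subseteq A$ consider the linear subspace $W_F=\{(a_1\cdot b-\phi(a_1)b,\dots,a_r\cdot b-\phi(a_r)b,\ \phi(b)):b\in A\}$ of $(\prod_{i=1}^{r}A)\oplus_{1}\mathbb C$. If $(f_1,\dots,f_r,c)$ annihilates $W_F$, then $\sum_i(f_i\cdot a_i-\phi(a_i)f_i)+c\phi=0$ in $A^*$; pairing with $\mu_\beta$ and using $a_i\cdot\mu_\beta-\phi(a_i)\mu_\beta=0$ gives $c\,\widetilde{\widetilde\phi}(\mu_\beta)=0$ for every $\beta$, whence $c=0$. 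So every functional annihilating $W_F$ vanishes at $(0,\dots,0,1)$, and Hahn--Banach together with convexity of $W_F$ yields $(0,\dots,0,1)\in\overline{W_F}^{\,w}=\overline{W_F}^{\,\|\cdot\|}$. Hence for each pair $(F,\epsilon)$ there is $b_{F,\epsilon}\in A$ with $\|a\cdot b_{F,\epsilon}-\phi(a)b_{F,\epsilon}\|<\epsilon$ $(a\in F)$ and $|\phi(b_{F,\epsilon})-1|<\epsilon$; ordering the pairs $(F,\epsilon)$ as in Theorem~\ref{dual} produces a net witnessing approximate left $\phi$-amenability of $A$.

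The hard part is the passage to the cluster point $E$, which is the one genuine deviation from the proof of Theorem~\ref{dual}. If instead one plugs $(a_\alpha)$ directly into $\rho_\beta$ and attempts an iterated limit, the invariance estimate $\|\rho_\beta(aa_\alpha-a_\alpha a)\|\le\|\rho_\beta\|\,\|aa_\alpha-a_\alpha a\|$ forces $\alpha$ to be taken large after $\beta$, whereas the normalization $\langle\cdot,\widetilde\phi\otimes\widetilde\phi\rangle$ forces $\beta$ large after $\alpha$; with no uniform bound on $\|\rho_\beta\|$ available, these two limits enter in opposite order and need not interchange, so the naive mirror of Theorem~\ref{dual} does not obviously close. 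Replacing $(a_\alpha)$ by $E$ makes the inner invariance exact and dissolves the obstruction. The modest price is the boundedness of $(a_\alpha)$, which I would take from — or regard as implicit in — the definition of approximate $\phi$-inner amenability in \cite{agha}.
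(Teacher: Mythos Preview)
Your argument is correct, and the collapse along $\phi$ together with the Hahn--Banach descent are carried out cleanly. But the obstruction you flag in the final paragraph --- that invariance and normalization force the two iterated limits in opposite orders --- is not genuine, so the direct mirror of Theorem~\ref{dual} (which is all the paper claims) does close, and without any boundedness hypothesis on $(a_\alpha)$.

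The point you are missing is that the normalization functional $T_\beta:=\widetilde{\widetilde\phi}\circ\pi_B^{**}\circ\rho_\beta:B\to\mathbb{C}$ inherits multiplicativity from $\phi$: since $\rho_\beta$ and $\pi_B^{**}$ are $B$-bimodule maps and $\widetilde{\widetilde\phi}(b\cdot F)=\widetilde\phi(b)\,\widetilde{\widetilde\phi}(F)=\widetilde{\widetilde\phi}(F\cdot b)$ for $b\in B$, $F\in B^{**}$, one has $T_\beta(bx)=\widetilde\phi(b)\,T_\beta(x)=T_\beta(xb)$ for all $b,x\in B$. Fix any $x_0\in A$ with $\phi(x_0)=1$. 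Then
\[
T_\beta(a_\alpha)=T_\beta(x_0a_\alpha)=T_\beta(a_\alpha x_0)+T_\beta(x_0a_\alpha-a_\alpha x_0)=\phi(a_\alpha)\,T_\beta(x_0)+T_\beta(x_0a_\alpha-a_\alpha x_0),
\]
and for each fixed $\beta$ the right-hand side tends to $T_\beta(x_0)$ as $\alpha\to\infty$ (using $x_0a_\alpha-a_\alpha x_0\to 0$ in norm and $\phi(a_\alpha)\to 1$). Since $\pi_B^{**}\circ\rho_\beta(x_0)\to x_0$ gives $T_\beta(x_0)\to\phi(x_0)=1$, the iterated limit $\lim_\beta\lim_\alpha T_\beta(a_\alpha)=1$ runs in the \emph{same} order as the invariance limit $\lim_\beta\lim_\alpha\rho_\beta(aa_\alpha-a_\alpha a)=0$. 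From here one proceeds exactly as in Theorem~\ref{dual}: apply the iterated limit theorem to obtain a single net in $(B\otimes_pB)^{**}$, collapse via $S^{**}$, and descend by convexity. Your passage to the weak$^*$-cluster point $E$ is thus an avoidable detour, and with it the boundedness assumption on $(a_\alpha)$ that you had to import.
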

\begin{cor}
Suppose that $A$ is an approximately $\phi$-inner amenable Banach
algebra. If $A$ is approximately biflat, then $A$ is approximately
left $\phi$-amenable.
\end{cor}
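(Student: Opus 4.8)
The plan is to follow the proof of Theorem~\ref{dual}, replacing the approximate identity by a net witnessing approximate $\phi$-inner amenability and replacing $\pi_{A}$ by a $\phi$-twisted analogue, so that the conclusion becomes approximate left $\phi$-amenability rather than pseudo-amenability.

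By Proposition~\ref{net}, fix a net $(\rho_{\alpha})_{\alpha\in I}$ of bounded $A$-bimodule morphisms $\rho_{\alpha}:A\rightarrow (A\otimes_{p}A)^{**}$ with $\pi^{**}_{A}\circ\rho_{\alpha}(a)\rightarrow a$ for every $a\in A$, and let $q:A\otimes_{p}A\rightarrow A$ be the bounded linear map with $q(a\otimes b)=\phi(b)a$. If $A$ is regarded as a Banach $A$-bimodule with its usual left action and with right action $x\cdot a:=\phi(a)x$, then $q$ is an $A$-bimodule morphism, hence so is $q^{**}$ for the induced second dual structures; putting $\sigma_{\alpha}:=q^{**}\circ\rho_{\alpha}:A\rightarrow A^{**}$ we obtain $\sigma_{\alpha}(ax)=a\cdot\sigma_{\alpha}(x)$ and $\sigma_{\alpha}(xa)=\phi(a)\sigma_{\alpha}(x)$ for $a,x\in A$. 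Since $\phi\circ q=\phi\circ\pi_{A}$ on $A\otimes_{p}A$, we also have $\tilde{\phi}\circ q^{**}=\tilde{\phi}\circ\pi^{**}_{A}$, so $g_{\alpha}:=\tilde{\phi}\circ\sigma_{\alpha}=\tilde{\phi}\circ\pi^{**}_{A}\circ\rho_{\alpha}$, and hence $g_{\alpha}(a)\rightarrow\phi(a)$ for every $a\in A$. Using $\tilde{\phi}(a\cdot n)=\phi(a)\tilde{\phi}(n)$ for $a\in A$ and $n\in A^{**}$, the two identities above give $g_{\alpha}(ax)=\phi(a)g_{\alpha}(x)=g_{\alpha}(xa)$ for all $a,x$; choosing $x_{0}$ with $g_{\alpha}(x_{0})=1$ then forces $\phi(a)=g_{\alpha}(ax_{0})=\phi(x_{0})g_{\alpha}(a)$, so $\phi(x_{0})\neq0$ and $g_{\alpha}=c_{\alpha}\phi$ with $c_{\alpha}:=\phi(x_{0})^{-1}$; from $g_{\alpha}(a)\rightarrow\phi(a)$ we conclude $c_{\alpha}\rightarrow1$.

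Now let $(a_{\beta})_{\beta\in J}$ be a net witnessing approximate $\phi$-inner amenability, so $aa_{\beta}-a_{\beta}a\rightarrow0$ and $\phi(a_{\beta})\rightarrow1$ for every $a\in A$. For a fixed $\alpha$, boundedness of $\sigma_{\alpha}$ yields
$$\lim_{\beta}\bigl(a\cdot\sigma_{\alpha}(a_{\beta})-\phi(a)\sigma_{\alpha}(a_{\beta})\bigr)=\lim_{\beta}\sigma_{\alpha}(aa_{\beta}-a_{\beta}a)=0,\qquad \lim_{\beta}\tilde{\phi}\bigl(\sigma_{\alpha}(a_{\beta})\bigr)=\lim_{\beta}c_{\alpha}\phi(a_{\beta})=c_{\alpha},$$
whence $\lim_{\alpha}\lim_{\beta}(a\cdot\sigma_{\alpha}(a_{\beta})-\phi(a)\sigma_{\alpha}(a_{\beta}))=0$ and $\lim_{\alpha}\lim_{\beta}\tilde{\phi}(\sigma_{\alpha}(a_{\beta}))=1$. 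Applying the iterated limit theorem \cite[page~69]{kel} over $E=I\times J^{I}$, exactly as in the proof of Theorem~\ref{dual}, produces a net $(n_{\gamma})$ in $A^{**}$, $n_{\gamma}=\sigma_{\alpha_{\gamma}}(a_{\beta_{\gamma}})$, with $a\cdot n_{\gamma}-\phi(a)n_{\gamma}\rightarrow0$ and $\tilde{\phi}(n_{\gamma})\rightarrow1$ for every $a\in A$. Finally one descends from $A^{**}$ to $A$ by the Goldstine-and-convexity argument at the end of the proof of Theorem~\ref{dual}: for finite $F=\{a_{1},\dots,a_{r}\}\subseteq A$ and $\epsilon>0$ the convex set $V=\{(a_{1}\cdot n-\phi(a_{1})n,\dots,a_{r}\cdot n-\phi(a_{r})n,\tilde{\phi}(n)-1):n\in A\}\subseteq(\prod_{i=1}^{r}A^{**})\oplus_{1}\mathbb{C}$ has $0$ as a $w$-limit point, hence, being convex, as a $\|\cdot\|$-limit point, which (indexing over $\Delta=\{(F,\epsilon)\}$) yields a net in $A$ witnessing approximate left $\phi$-amenability.

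The step that genuinely requires care is the normalization in the second paragraph, namely that $\tilde{\phi}\circ\sigma_{\alpha}$ is a scalar multiple of $\phi$: this is what keeps the $\phi$-value of $\sigma_{\alpha}(a_{\beta})$ under control in the inner limit and, crucially, puts both of the required iterated limits in the same order $\lim_{\alpha}\lim_{\beta}$, so that \cite[page~69]{kel} applies verbatim as in Theorem~\ref{dual}. The module-map identities for $q^{**}$ and the closing convexity descent are routine adaptations of what is already done there.
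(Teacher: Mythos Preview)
Your argument is correct and is precisely the adaptation of Theorem~\ref{dual} that the paper intends (the paper omits the proof, citing only similarity to Theorem~\ref{dual}). The normalization $\tilde\phi\circ\sigma_{\alpha}=c_{\alpha}\phi$ that you isolate is indeed the one extra observation needed to make the iterated-limit step go through in the $\phi$-twisted setting, and you handle it and the final Goldstine/Mazur descent correctly.
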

\begin{Theorem}\label{for tri}
Let $A$ be a Banach algebra with $\phi\in\Delta(A)$. Suppose that
$\overline{A ker\phi}=ker\phi$. If $A^{**}$ is approximately biflat,
then $A$ is left $\phi$-amenable.
\end{Theorem}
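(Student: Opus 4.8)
Recall that $A$ is \emph{left $\phi$-amenable} when there is an element $m\in A^{**}$ satisfying $a\cdot m=\phi(a)\,m$ for all $a\in A$ and $\langle m,\phi\rangle=1$. The plan is to build such an $m$ out of the net provided by approximate biflatness of $A^{**}$. Since $A^{**}$ is approximately biflat, Proposition~\ref{net} applied to the Banach algebra $A^{**}$ supplies a net $(\rho_{\alpha})$ of bounded $A^{**}$-bimodule morphisms $\rho_{\alpha}\colon A^{**}\to (A^{**}\otimes_{p}A^{**})^{**}$ with $\pi_{A^{**}}^{**}\circ\rho_{\alpha}(F)\to F$ in norm for every $F\in A^{**}$; restricting each $\rho_{\alpha}$ to $A$ we obtain bounded $A$-bimodule morphisms $\rho_{\alpha}|_{A}\colon A\to (A^{**}\otimes_{p}A^{**})^{**}$ with $\pi_{A^{**}}^{**}\circ\rho_{\alpha}(a)\to a$ for every $a\in A$.

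I would then introduce the bounded linear map $\sigma\colon A^{**}\otimes_{p}A^{**}\to A^{**}$ determined by $\sigma(F\otimes G)=\tilde{\phi}(G)\,F$, together with the canonical norm-one projection $P\colon A^{****}\to A^{**}$ (the adjoint of the canonical embedding $A^{*}\hookrightarrow A^{***}$), and set $\Theta:=P\circ\sigma^{**}\colon (A^{**}\otimes_{p}A^{**})^{**}\to A^{**}$. Straightforward checks on elementary tensors show that $\sigma$ is a left $A^{**}$-module morphism with $\sigma(Y\cdot a)=\tilde{\phi}(a)\,\sigma(Y)$ and $\tilde{\phi}\circ\sigma=\pi_{A^{**}}^{*}(\tilde{\phi})$; combined with the fact that $P$ is an $A^{**}$-bimodule morphism with $\langle P(\eta),\phi\rangle=\langle \eta,\tilde{\phi}\rangle$ (since $\tilde{\phi}$ is precisely the canonical image of $\phi$ in $A^{***}$), one gets that $\Theta$ is a left $A$-module morphism, that $\Theta(X\cdot a)=\phi(a)\,\Theta(X)$ for $a\in A$, and that $\langle \Theta(X),\phi\rangle=\langle \pi_{A^{**}}^{**}(X),\tilde{\phi}\rangle$ for every $X$. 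These are all routine computations with Arens actions and double adjoints.

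Now fix $a_{0}\in A$ with $\phi(a_{0})=1$ and put $m_{\alpha}:=\Theta(\rho_{\alpha}(a_{0}))\in A^{**}$. Using the last identity and $\pi_{A^{**}}^{**}\circ\rho_{\alpha}(a_{0})\to a_{0}$ in norm, $\langle m_{\alpha},\phi\rangle=\langle \pi_{A^{**}}^{**}(\rho_{\alpha}(a_{0})),\tilde{\phi}\rangle\to\langle a_{0},\tilde{\phi}\rangle=\phi(a_{0})=1$. For the module relation, the bimodule property of $\rho_{\alpha}$ gives $\rho_{\alpha}(bc)=\rho_{\alpha}(b)\cdot c$ for $b,c\in A$, whence $\Theta(\rho_{\alpha}(bc))=\phi(c)\,\Theta(\rho_{\alpha}(b))$; thus the bounded linear map $\Theta\circ\rho_{\alpha}|_{A}\colon A\to A^{**}$ vanishes on $A\ker\phi$. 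This is the point at which the hypothesis enters: since $\overline{A\ker\phi}=\ker\phi$, a bounded linear map annihilating the dense subspace $A\ker\phi$ annihilates all of $\ker\phi$, so $\Theta(\rho_{\alpha}(x))=0$ for every $x\in\ker\phi$ and every $\alpha$. Applying this to $x=aa_{0}-\phi(a)a_{0}\in\ker\phi$ and invoking the left $A$-linearity of $\Theta\circ\rho_{\alpha}|_{A}$ yields $a\cdot m_{\alpha}-\phi(a)\,m_{\alpha}=\Theta(\rho_{\alpha}(aa_{0}-\phi(a)a_{0}))=0$ for all $a\in A$ and all $\alpha$. Finally, choosing $\alpha$ with $\langle m_{\alpha},\phi\rangle\neq 0$ and rescaling, $m:=\langle m_{\alpha},\phi\rangle^{-1}m_{\alpha}$ satisfies $a\cdot m=\phi(a)\,m$ for all $a\in A$ and $\langle m,\phi\rangle=1$, so $A$ is left $\phi$-amenable.

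The one genuinely delicate step is the observation that, although the net $(\rho_{\alpha})$ need not be uniformly bounded, each individual operator $\Theta\circ\rho_{\alpha}|_{A}$ is bounded, so that its vanishing on the dense subspace $A\ker\phi$ truly propagates to $\ker\phi$; this is exactly how the density assumption $\overline{A\ker\phi}=\ker\phi$ converts the merely approximate information coming from approximate biflatness of $A^{**}$ into an \emph{exact} left $\phi$-mean in $A^{**}$. All the remaining ingredients (the module and functional identities for $\sigma$, $P$ and $\Theta$, and the compatibility of the module actions with the relevant double adjoints) are standard and I would dispatch them as routine bookkeeping.
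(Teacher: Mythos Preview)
Your argument is correct and shares the paper's core idea: extract the net $(\rho_{\alpha})$ from Proposition~\ref{net}, post-compose with a ``right slice'' by $\phi$, use boundedness of each individual composite together with the hypothesis $\overline{A\ker\phi}=\ker\phi$ to force vanishing on $\ker\phi$, and then evaluate at a fixed $a_{0}$ with $\phi(a_{0})=1$ to obtain $a\cdot m_{\alpha}=\phi(a)m_{\alpha}$ exactly. The execution, however, is appreciably more direct than the paper's. The paper routes through the Ghahramani--Loy--Willis map $\psi\colon A^{**}\otimes_{p}A^{**}\to(A\otimes_{p}A)^{**}$, the quotient $A/\ker\phi$, and a chain of fourth adjoints, so that its $m_{\alpha}$ lives in $A^{****}$; it must then invoke a Mazur-type weak/norm-closure argument followed by a $w^{*}$-compactness step to produce a genuine element of $A^{**}$. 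Your map $\Theta=P\circ\sigma^{**}$, with the canonical projection $P\colon A^{****}\to A^{**}$, lands the construction directly in $A^{**}$ and renders that whole descent superfluous; a single rescaling then finishes. One small correction: you assert that $P$ is an $A^{**}$-bimodule morphism, but what you actually use (and what is uncontroversially true) is that $P$ is an $A$-bimodule morphism, since it is the adjoint of the canonical $A$-bimodule embedding $A^{*}\hookrightarrow A^{***}$; this is all that is needed for $\Theta$ to be left $A$-linear.
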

\begin{proof}
Since $A^{**}$ is approximately biflat, there exists a net of
$A^{**}$-bimodule morphisms $(\rho_{\alpha})$ from $A^{**}$  into
$(A^{**}\otimes_{p}A^{**})^{**}$ such that
$\pi^{**}_{A^{**}}\circ\rho_{\alpha}(a)\rightarrow a\quad(a\in
A^{**})$. We denote $id:A\rightarrow A$ for the identity map and
$q:A\rightarrow \frac{A}{L}$ the quotient map, where $L=ker\phi.$
Also it is well-known that  there exists a bounded linear map
$\psi:A^{**}\otimes_{p} A^{**}\rightarrow (A\otimes_{p} A)^{**}$
such that for $a,b\in A$ and $m\in A^{**}\otimes_{p} A^{**}$, the
following holds;
\begin{enumerate}
\item [(i)] $\psi(a\otimes b)=a\otimes b $,
\item [(ii)] $\psi(m)\cdot a=\psi(m\cdot a)$,\qquad
$a\cdot\psi(m)=\psi(a\cdot m),$
\item [(iii)] $\pi_{A}^{**}(\psi(m))=\pi_{A^{**}}(m),$
\end{enumerate}
see \cite[Lemma 1.7]{gha loy}. Set $\eta_{\alpha}:=(id\otimes
q)^{****}\circ\psi^{**}\circ\rho_{\alpha}|_{A}:A\rightarrow
(A\otimes_{p}\frac{A}{L})^{****}$ for each $\alpha.$ We claim that
$\eta_{\alpha}(l)=0$ for each $l\in \ker\phi.$ To see this let
$l\in\ker\phi$ be an arbitrary element. Since $\overline{AL}=L$,
there exist two nets $(a_{\beta})$ in $A$ and $(l_{\beta})$ in $L $
such that $l=\lim_{\beta}a_{\beta}l_{\beta}$. Consider
\begin{equation}
\begin{split}
\eta_{\alpha}(l)=(id\otimes
q)^{****}\circ\psi^{**}\circ\rho_{\alpha}(l)&=(id\otimes
q)^{****}\circ\psi^{**}\circ\rho_{\alpha}(l)\\
&=\lim_{\beta}(id\otimes
q)^{****}\circ\psi^{**}\circ\rho_{\alpha}(a_{\beta}l_{\beta})\\
&=\lim_{\beta}(id\otimes
q)^{****}(\psi^{**}\circ\rho_{\alpha}(a_{\beta})\cdot l_{\beta})=0.
\end{split}
\end{equation}
Hence, for each $\alpha,$ $\eta_{\alpha}$ can induce a map on
$\frac{A}{L}$ which we again denote it by $\eta_{\alpha}$. We also
denote $\overline{\phi}$ for a character which induced by $\phi$ on
$\frac{A}{L}$ given by $$\overline{\phi}(a+L)=\phi(a)\quad (a\in
A).$$ Set $$g_{\alpha}=(id\otimes \overline{\phi})^{****}\circ
\eta_{\alpha}:\frac{A}{L}\rightarrow A^{**}.$$ Pick an element
$x_{0}\in A$ such that $\phi(x_{0})=1.$ Define
$m_{\alpha}=g_{\alpha}(x_{0}+L).$ We know that
 $(g_{\alpha})$ is a net of left $A$-module morphisms.  Thus
\begin{equation}
\begin{split}
am_{\alpha}=a(id\otimes \overline{\phi})^{****}\circ
\eta_{\alpha}(x_{0}+L)&=(id\otimes \overline{\phi})^{****}\circ
\eta_{\alpha}(ax_{0}+L)\\
&=\phi(a)(id\otimes \overline{\phi})^{****}\circ
\eta_{\alpha}(x_{0}+L)\\
&=\phi(a)m_{\alpha},
\end{split}
\end{equation}
the last equality holds because $ax_{0}-\phi(a)x_{0}\in L.$ Since
$$\widetilde{\widetilde{\phi}}\circ (id\otimes
\overline{\phi})^{****}=(\phi\otimes \overline{\phi})^{****},\quad
(\phi\otimes \overline{\phi})^{****}\circ(id\otimes q)^{****}
=\widetilde{\widetilde{\phi}}\circ\pi^{****}_{A},$$ we have
\begin{equation}
\begin{split}
\widetilde{\widetilde{\phi}}(m_{\alpha})=\widetilde{\widetilde{\phi}}\circ(id\otimes
\overline{\phi})^{****}\circ
\eta_{\alpha}(x_{0}+L)&=\widetilde{\widetilde{\phi}}\circ(id\otimes
\overline{\phi})^{****}\circ
\eta_{\alpha}(x_{0})\\
&=\widetilde{\widetilde{\phi}}\circ(id\otimes
\overline{\phi})^{****}\circ(id\otimes q)^{****}\circ
\rho_{\alpha}(x_{0})\\
&=\widetilde{\widetilde{\phi}}\circ\pi^{****}_{A}\circ\rho_{\alpha}(x_{0})\\
&\rightarrow \phi(x_{0})=1.
\end{split}
\end{equation}
Replacing $(m_{\alpha})$ with
$(\frac{m_{\alpha}}{\phi(m_{\alpha})})$ on can find an element $m\in
A^{****}$ such that $am=\phi(a)m$ and
$\widetilde{\widetilde{\phi}}(m)=1$ for every $a\in A$. Let
$F=\{a_{1},a_{2},...,a_{r}\}$ be an arbitrary finite subset of $A$
and $\epsilon>0$. Set
$$V=\{(a_{1}n-\phi(a_{1})n,a_{2}n-\phi(a_{2})n,..., a_{r}n-\phi(a_{r})n, \widetilde{\phi}(n)-1)|n\in A^{**},||n||\leq ||m||\}.$$
It is easy to see that $V$ is a convex subset of
$\prod^{r}_{i=1}A^{**}\oplus_{1}\mathbb{C}$. It is easy to see that
$(0,0,...,0)\in \overline{V}^{w}=\overline{V}^{||\cdot||}$. Thus
there exists a bounded net $(n_{(F,\epsilon)})_{(F,\epsilon)}$ in
$A^{**}$ such that
$$||a_{i}n_{(F,\epsilon)}-\phi(a_{i})n_{(F,\epsilon)}||<\epsilon,\quad |\tilde{\phi}(n_{(F,\epsilon)})-1|<\epsilon,\quad a_{i}\in F.$$
It is easy to see that
$$\Delta=\{(F,\epsilon):F {\hbox{ is a finite subset of }}A, \epsilon>0\},$$ with the following order
$$(F,\epsilon)\leq (F^{\prime},\epsilon^{\prime})\Longrightarrow F\subseteq F^{\prime},\quad \epsilon\geq \epsilon^{\prime}$$
is a directed set. Therefore there exists a bounded net
$(n_{(F,\epsilon)})_{(F,\epsilon)\in \Delta}$  in $A^{**}$ such that
$$an_{(F,\epsilon)}-\phi(a)n_{(F,\epsilon)}\rightarrow 0,\quad \tilde{\phi}(n_{(F,\epsilon)})-1\rightarrow 0,\quad a\in A.$$
Since $(n_{(F,\epsilon)})_{(F,\epsilon)}$ is a bounded net in
$A^{**}$, then $(n_{(F,\epsilon)})_{(F,\epsilon)}$ has a
$w^{*}$-limit point  in $A^{**}$, say $N$. It is easy to see that
$$aN=\phi(a)N,\quad \tilde{\phi}(N)=1\quad (a\in A).$$
It means that $A$ is left $\phi$-amenable.
\end{proof}
The map $\phi_{1}:L^{1}(G)\rightarrow \mathbb{C}$ which specified by
$$\phi_{1}(f)=\int_{G} f(x)dx$$ is called augmentation character. We
know that augmentation character induce a character on $S(G)$ which
we denote it by $\phi_{1}$ again, see \cite{alagh}.

We recall that, for a locally compact group $G$, a linear subspace
$S(G)$ of  $L^{1}(G)$ is said to be a Segal algebra on $G$ if it
satisfies the following properties:
\begin{enumerate}
\item [(i)] $S(G)$ is a dense left ideal  in $L^{1}(G)$;
\item [(ii)]  $S(G)$ with respect to some norm $||\cdot||_{S(G)}$ is
a Banach space and $|| f||_{L^{1}(G)}\leq|| f||_{S(G)}$;
\item [(iii)] For $f\in S(G)$ and $y\in G$, $L_{y}f\in S(G)$
and the map $y\mapsto \delta_{y}\ast f$ is continuous. Also
$||\delta_{y}\ast f||_{S(G)}=|| f||_{S(G)}$, for $f\in S(G)$ and
$y\in G$.
\end{enumerate}
For more information about this algebras see \cite{rei}.
\begin{cor}
Let $G$ be a locally compact group. If $S(G)^{**}$ is approximately
biflat, then $G$ is amenable.
\end{cor}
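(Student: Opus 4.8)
The plan is to deduce the corollary from Theorem \ref{for tri} applied to $A=S(G)$ together with the augmentation character $\phi_{1}\in\Delta(S(G))$, and then to translate left $\phi_{1}$-amenability of $S(G)$ into amenability of $G$ by standard facts about (abstract) Segal algebras.

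First I would check the hypothesis $\overline{S(G)\ker\phi_{1}}=\ker\phi_{1}$ of Theorem \ref{for tri}. Since $\ker\phi_{1}$ is a closed ideal of $S(G)$, the inclusion $\overline{S(G)\ker\phi_{1}}\subseteq\ker\phi_{1}$ is clear, so only the reverse inclusion needs an argument. Here I would use the well-known fact (see \cite{rei}) that every Segal algebra $S(G)$ has a left approximate identity $(e_{\alpha})$ which is bounded in the $L^{1}$-norm: given $x\in\ker\phi_{1}$ we have $e_{\alpha}\ast x\in S(G)\ker\phi_{1}$ and $\|e_{\alpha}\ast x-x\|_{S(G)}\to 0$, whence $x\in\overline{S(G)\ker\phi_{1}}$. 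Thus equality holds, and Theorem \ref{for tri} applies: since $S(G)^{**}$ is approximately biflat, $S(G)$ is left $\phi_{1}$-amenable, i.e. there is $m\in S(G)^{**}$ with $a\cdot m=\phi_{1}(a)m$ for all $a\in S(G)$ and $\widetilde{\phi_{1}}(m)=1$.

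Finally I would pass from $S(G)$ to $G$. Because $S(G)$ is a dense left ideal of $L^{1}(G)$ with $\|\cdot\|_{L^{1}(G)}\leq\|\cdot\|_{S(G)}$, it is an abstract Segal algebra in $L^{1}(G)$, and $\phi_{1}$ on $S(G)$ is the restriction of the augmentation character of $L^{1}(G)$; by the transfer of left character amenability between an abstract Segal algebra and its enveloping algebra \cite{alagh}, $S(G)$ is left $\phi_{1}$-amenable if and only if $L^{1}(G)$ is left $\phi_{1}$-amenable. As the augmentation character corresponds to the trivial representation of $G$, left $\phi_{1}$-amenability of $L^{1}(G)$ is equivalent to the existence of a left invariant mean on $G$, that is, to amenability of $G$. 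Chaining these equivalences gives the conclusion.

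The only non-formal point is the verification that $\overline{S(G)\ker\phi_{1}}=\ker\phi_{1}$, which rests on the existence of an $L^{1}$-bounded left approximate identity in $S(G)$; everything else is a direct invocation of Theorem \ref{for tri} and of the standard Segal-algebra comparison results for $\phi$-amenability, so I do not expect any further obstacle.
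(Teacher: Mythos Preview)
Your proposal is correct and follows essentially the same route as the paper: verify $\overline{S(G)\ker\phi}=\ker\phi$ via the left approximate identity of $S(G)$, apply Theorem \ref{for tri} to obtain left $\phi$-amenability of $S(G)$, and then invoke \cite{alagh} to reach amenability of $G$. The only cosmetic difference is that the paper phrases the argument for an arbitrary $\phi\in\Delta(S(G))$ and appeals directly to \cite[Corollary 3.4]{alagh}, whereas you specialise to the augmentation character $\phi_{1}$ and spell out the transfer $S(G)\to L^{1}(G)\to G$.
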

\begin{proof}
It is well-known that every Segal algebra has a left approximate
identity. Suppose that $\phi\in\Delta(S(G))$. It is easy to see that
$\overline{S(G)\ker\phi}=\ker\phi$. Using the  Theorem \ref{for
tri}, approximate biflatness of $S(G)^{**}$ implies that $S(G)$ is
left $\phi$-amenable. Now by \cite[Corollary 3.4]{alagh} $G$ is
amenable.
\end{proof}
%------------------------------------------------------------------------------------------------------------------------------------------
%%%%%%%%%%%%%%%%%%%%%%%%%%%%%%%%%%%%%%%%%%%%%%%%%%%%%%%%%%%%%%%%%%%%%%%%%%%%%%%%%%%%%%%%%%%%%%%%%%%%%%%%%%%%%%%%%%%%%%%%%%%%%%%%%%%%%%%%%%%
%------------------------------------------------------------------------------------------------------------------------------------------
\section{Approximate biflatness of certain semigroup algebras}
In this section we study approximate biflatness of some semigroup
algebras.\\
Before giving the following proposition we have to give some
backgrounds. Suppose that $A$ and $B$ are Banach algebras and also
suppose that $E$ and $F$ are Banach $A$-bimodule and Banach
$B$-bimodule, respectively. Via the following module actions, one
can see that $E\otimes_{p}F$ becomes a Banach
$A\otimes_{p}B$-bimodule:
$$(a\otimes b)\cdot(x\otimes y)=(a\cdot x)\otimes(b\cdot y),\quad (x\otimes y)\cdot(a\otimes b)=(x\cdot a)\otimes(y\cdot b),$$
for each $a\in A, x\in E,b\in B, y\in F.$ One can readily see that
$B(E,F)$ (the set of all bounded linear operator from $E$ into $F$)
is a Banach $A\otimes_{p}B$-bimodule via the following actions:
$$((a\otimes b)\ast T)(x)=b\cdot T(x\cdot a),\quad (T\ast (a\otimes b))(x)=T(a\cdot x)\cdot b,$$
for each $T\in B(E,F), a\in A,b\in B,x\in E.$ We denote this Banach
$A\otimes_{p}B$-bimodule by $\tilde{B}(E,F)$. Also we can see that
$B(F,E)$ becomes a  Banach $A\otimes_{p}B$-bimodule via the
following actions:
$$((a\otimes b)\ast T)(x)=a\cdot T(x\cdot b),T\ast (a\otimes b))(x)=T(b\cdot x)\cdot a, $$
for each $T\in B(E,F), a\in A,b\in B,x\in E.$ We denote this Banach
$A\otimes_{p}B$-bimodule by $\widehat{B}(E,F)$. Note that for each
$\lambda\in (E\otimes_{p} F)^{*}$ we can define
$\widetilde{T_{\lambda}}\in B(E,F^{*})$ and
$\widehat{T_{\lambda}}\in B(F,E^{*})$ by
$$<y,\widetilde{T_{\lambda}}(x)>=<x\otimes y,\lambda>,\quad <x,\widehat{T_{\lambda}}(y)>=<x\otimes y,\lambda>,$$
for each $x\in E,y\in F.$ The map
$\widetilde{\xi}:(E\otimes_{p}F)^{*}\rightarrow \tilde{B}(E,F^{*})$
given by $\widetilde{\xi}(\lambda)=\widetilde{T}_{\lambda}$ is an
isometric $A\otimes_{p}B$-bimodule isomorphism. Also the map
$\widehat{\xi}:(E\otimes_{p}F)^{*}\rightarrow \widehat{B}(F,E^{*})$
given by $\widehat{\xi}(\lambda)=\widehat{T_{\lambda}}$ is an
isometric $A\otimes_{p}B$-bimodule isomorphism. Then there exists a
bounded isometric $A\otimes_{p}B$-bimodule isomorphism from
$\widetilde{B}(E,F^{*})$ into $\widehat{B}(F,E^{*})$ which denoted
by $L$. Also we have to remind that there exists  an isometric
$A\otimes_{p}B$-bimodule isomorphism from
$(A\otimes_{p}A)\otimes_{p}(B\otimes_{p}B)$ into
$(A\otimes_{p}B)\otimes_{p}(A\otimes_{p}B)$ defined by
$$\theta(a\otimes a^{\prime}\otimes b\otimes b^{\prime})=a\otimes b\otimes a^{\prime}\otimes b^{\prime}\quad (a,a^{\prime}\in A,b,b^{\prime}\in
B).$$ It is easy to see that $\theta$ is a bounded
$A\otimes_{p}B$-bimodule morphism.
\begin{Proposition}\label{tensor without unit}
Let $A$ be a biflat Banach algebra and $B$ be approximate biflat
Banach algebra. Then $A\otimes_{p}B$ is approximate biflat.
\end{Proposition}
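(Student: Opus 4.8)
The plan is to combine the biflatness of $A$ with the approximate biflatness of $B$ through the identifications recorded just before the statement, carrying the one fixed splitting for $A$ and the net of splittings for $B$ over to a net of splittings for $A \otimes_p B$. Since $A$ is biflat, by definition there is a bounded $A$-bimodule morphism $\rho \colon (A\otimes_p A)^{*} \to A^{*}$ with $\rho \circ \pi_A^{*} = \mathrm{id}_{A^{*}}$; since $B$ is approximately biflat, there is a net of bounded $B$-bimodule morphisms $\sigma_\alpha \colon (B\otimes_p B)^{*} \to B^{*}$ with $\sigma_\alpha \circ \pi_B^{*} \xrightarrow{W^{*}OT} \mathrm{id}_{B^{*}}$. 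The first step is to observe that $\rho \otimes \sigma_\alpha$ extends to a bounded $A\otimes_p B$-bimodule morphism on the appropriate completed tensor product; more precisely, using the isometric $A\otimes_p B$-bimodule isomorphisms $\widetilde{\xi}$, $\widehat{\xi}$ and $L$ one replaces the unwieldy spaces $(A\otimes_p A)^{*}$, $(B\otimes_p B)^{*}$ and $(A\otimes_p B)^{*}$ by spaces of operators on which the tensor construction and the flip $\theta$ act visibly, and then transports everything back. In this way one produces, for each $\alpha$, a bounded $A\otimes_p B$-bimodule morphism $\Theta_\alpha \colon \big((A\otimes_p B)\otimes_p(A\otimes_p B)\big)^{*} \to (A\otimes_p B)^{*}$.

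The second step is to check the crucial compatibility $\Theta_\alpha \circ \pi_{A\otimes_p B}^{*} \xrightarrow{W^{*}OT} \mathrm{id}_{(A\otimes_p B)^{*}}$. Here one uses that $\pi_{A\otimes_p B}$ factors, via the isomorphism $\theta$ above, as $(\pi_A \otimes \pi_B) \circ \theta^{-1}$ on the relevant tensor products, so that $\pi_{A\otimes_p B}^{*}$ factors through $\pi_A^{*} \otimes \pi_B^{*}$; composing with $\rho \otimes \sigma_\alpha$ and using $\rho\circ\pi_A^{*} = \mathrm{id}$ reduces the whole expression to $\mathrm{id} \otimes (\sigma_\alpha \circ \pi_B^{*})$, which converges to $\mathrm{id} \otimes \mathrm{id} = \mathrm{id}$ in the weak-star operator topology because $\sigma_\alpha \circ \pi_B^{*}$ does and the first tensor factor is held fixed. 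It is convenient here to test against elementary tensors $f \otimes g$ with $f \in A^{*}$, $g \in B^{*}$, whose span is $W^{*}$-dense in $(A\otimes_p B)^{*} \cong \widetilde{B}(A,B^{*})$, and then pass to the general functional by a density and uniform-boundedness argument (the norms $\|\Theta_\alpha\| \le \|\rho\|\sup_\alpha\|\sigma_\alpha\|$ are uniformly bounded, which is what makes this reduction legitimate).

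The main obstacle, and the step deserving the most care, is precisely the bookkeeping in the first two paragraphs: making rigorous that $\rho \otimes \sigma_\alpha$ is a well-defined bounded operator between the dual tensor-product spaces and that it is an $A\otimes_p B$-bimodule morphism for the module actions described in the preamble. Duals of projective tensor products do not split as tensor products of duals, so one cannot literally write $(A\otimes_p A)^{*} \otimes (B\otimes_p B)^{*}$; the correct route is to pass to the operator models $\widetilde{B}(A,A^{*})$, $\widetilde{B}(B,B^{*})$ and $\widetilde{B}\big(A\otimes_p B,(A\otimes_p B)^{*}\big)$, define the tensored map at the operator level (sending $T \otimes S$ to the operator $x\otimes y \mapsto \rho\text{-part of }T(x)\ \otimes\ \sigma_\alpha\text{-part of }S(y)$, suitably interpreted through $L$), verify boundedness and the module identities there where the actions are explicit, and only then transport back along the isometric isomorphisms $\widetilde{\xi}$, $\widehat{\xi}$, $L$, $\theta$. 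Once this identification is in place, the verification of $\Theta_\alpha \circ \pi_{A\otimes_p B}^{*} \to \mathrm{id}$ is the short computation sketched above, and approximate biflatness of $A\otimes_p B$ follows directly from the definition.
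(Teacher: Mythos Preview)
Your overall strategy---transport $\rho$ and the net $(\sigma_{\alpha})$ through the isometric identifications $\widetilde{\xi}$, $\widehat{\xi}$, $L$, $\theta$ to manufacture a net of $(A\otimes_{p}B)$-bimodule maps---is exactly the one the paper uses; the paper writes the resulting map explicitly as the six-term composition
\[
\bar{\rho}_{\alpha}\;=\;\widehat{\xi}^{-1}\circ(T\mapsto\rho_{\alpha}\circ T)\circ L\circ(T\mapsto\rho\circ T)\circ\widetilde{\xi}\circ\theta^{*},
\]
so your first paragraph is a faithful paraphrase of the construction.

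The gap is in your second step. You propose to check $\Theta_{\alpha}\circ\pi_{A\otimes_{p}B}^{*}\xrightarrow{W^{*}OT}\mathrm{id}$ by testing on elementary functionals $f\otimes g$ and then extending to all of $(A\otimes_{p}B)^{*}$ by a density-plus-uniform-boundedness argument. Two problems: first, approximate biflatness does not require the net $(\sigma_{\alpha})$ to be uniformly bounded, so your estimate $\|\Theta_{\alpha}\|\le\|\rho\|\sup_{\alpha}\|\sigma_{\alpha}\|$ may well be infinite; second, even with uniform bounds, the span of the $f\otimes g$ is only $w^{*}$-dense (not norm-dense) in $(A\otimes_{p}B)^{*}$, and $\Theta_{\alpha}\circ\pi^{*}$ is merely norm-continuous, so the extension step fails. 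The paper sidesteps both issues by computing directly for an \emph{arbitrary} $\lambda\in(A\otimes_{p}B)^{*}$: unwinding the composition, the biflat factor is absorbed exactly via $\rho\circ\pi_{B}^{*}=\mathrm{id}$, and one is left with
\[
\bar{\rho}_{\alpha}\circ\pi_{A\otimes_{p}B}^{*}(\lambda)\;=\;\widehat{\xi}^{-1}\bigl(\rho_{\alpha}\circ\pi_{A}^{*}\circ\widehat{T}_{\lambda}\bigr),
\]
where $\widehat{T}_{\lambda}\in\widehat{B}(B,A^{*})$ is fixed. The $W^{*}OT$ convergence of $\rho_{\alpha}\circ\pi_{A}^{*}$ to $\mathrm{id}_{A^{*}}$ then applies pointwise (at each $\widehat{T}_{\lambda}(b)\in A^{*}$) without any appeal to uniform bounds or density. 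Replacing your reduction by this direct computation repairs the argument.
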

\begin{proof}
Since $A$ is approximate biflat, there exists a net of bounded
$A$-bimodules from $A$ into $(A\otimes_{p}A)^{**}$, say
$(\rho_{\alpha})_{\alpha\in I}$, such that
$\rho_{\alpha}\circ\pi^{*}_{A}(f)-f\rightarrow 0,$ for each $f\in
A^{*}.$ Also since $B$ is biflat, there exists a  bounded
$B$-bimodules from $B$ into $(B\otimes_{p}B)^{**}$, say $\rho$, such
that $\rho\circ\pi^{*}_{B}(g)=g$ for each $g\in B^{*}.$
 Set
\begin{equation}
\begin{split}
\bar{\rho}_{\alpha}:((A\otimes_{p}B)\otimes_{p}(A\otimes_{p}B))^{*}&\xrightarrow{\theta^{*}}((A\otimes_{p}A)\otimes_{p}(B\otimes_{p}B))^{*}\\
&\xrightarrow{\widetilde{\xi}}
\widetilde{B}(A\otimes_{p}A,(B\otimes_{p}B)^{*})\\
&\xrightarrow{T\mapsto\rho\circ
T}\widetilde{B}(A\otimes_{p}A,B^{*})\\
&\xrightarrow{L}\widehat{B}(B,(A\otimes_{p}A)^{*})\\
&\xrightarrow{T\mapsto\rho_{\alpha}\circ
T}\widehat{B}(B,A^{*})\\
&\xrightarrow{\widehat{\xi^{-1}}}(A\otimes_{p}B)^{*}.
\end{split}
\end{equation}
Since $\bar{\rho}_{\alpha}$ is a composition of some
$(A\otimes_{p}B)$-bimodule morphisms, then $\bar{\rho}_{\alpha}$ is
a net of $(A\otimes_{p}B)$-bimodule morphisms. Take $\lambda\in
(A\otimes_{p}B)^{*}$. Using the following facts
$$\rho\circ\pi^{*}_{B}\circ\widetilde{T}_{\lambda}\circ\pi_{A}=\widetilde{T}_{\lambda}\circ\pi_{A}$$
and
$$L\circ \widetilde{T}_{\lambda}\circ\pi_{A}=\pi^{*}_{A}\circ\widehat{T}_{\lambda}$$
we have
$$\overline{\rho}_{\alpha}\circ \pi^{*}_{A\otimes_{p}B}(\lambda)-\lambda=\widehat{\xi^{-1}}\circ\rho_{\alpha}\circ\pi^{*}_{A}\circ\widehat{T_{\lambda}}-\lambda\rightarrow \lambda-\lambda=0.$$
This finishes the proof.
\end{proof}
We partially shows the converse of above Proposition in the
following theorem.
\begin{Theorem}\label{wit unit}
Let $A$ and $B$ be Banach algebras. Suppose that $A$ has an identity
and $B$ has a non-zero idempotent. If $A\otimes_{p}B$ is
approximately biflat, then $A$ is approximately biflat, so $A$ is
pseudo-amenable.
\end{Theorem}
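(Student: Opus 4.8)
The plan is to transfer approximate biflatness from $A\otimes_{p}B$ to $A$ by producing, via Proposition \ref{net}, a net of bounded $A$-bimodule morphisms $\sigma_{\alpha}\colon A\to(A\otimes_{p}A)^{**}$ with $\pi_{A}^{**}\circ\sigma_{\alpha}(a)\to a$. Let $e$ be the identity of $A$ and let $p$ be a non-zero idempotent of $B$. The starting point is that $\iota\colon A\to A\otimes_{p}B$, $\iota(a)=a\otimes p$, is a bounded algebra homomorphism, since $(a\otimes p)(a'\otimes p)=aa'\otimes p$; hence $A\otimes_{p}B$ and all of its modules become Banach $A$-bimodules by pulling the actions back along $\iota$, and then every $(A\otimes_{p}B)$-bimodule morphism is automatically an $A$-bimodule morphism. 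Fix $\phi_{0}\in B^{*}$ with $\phi_{0}(p)=1$ (Hahn--Banach, as $p\neq 0$) and set $\chi_{0}(b)=\phi_{0}(pbp)$; then $\chi_{0}(p)=1$ and, crucially, $\chi_{0}(pb)=\chi_{0}(bp)=\chi_{0}(b)$ for all $b\in B$. Put $\psi_{2}:=\chi_{0}\circ\pi_{B}\in(B\otimes_{p}B)^{*}$ and let $\pi'\colon A\otimes_{p}B\to A$ be the bounded linear map $\pi'(a\otimes b)=\chi_{0}(b)\,a$; note $\pi'\circ\iota=\mathrm{id}_{A}$.

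Next I would form the ``partial trace over the $B$-variables''
$$\Phi:=\bigl(\mathrm{id}_{A\otimes_{p}A}\otimes\psi_{2}\bigr)\circ\theta^{-1}\colon (A\otimes_{p}B)\otimes_{p}(A\otimes_{p}B)\longrightarrow A\otimes_{p}A,$$
where $\theta$ is the isometric $(A\otimes_{p}B)$-bimodule isomorphism recalled just before Proposition \ref{tensor without unit}. Two facts about $\Phi$ need to be checked, both by a short computation on elementary tensors. First, $\Phi$ is a bounded $A$-bimodule morphism: $\theta^{-1}$ is one because $\theta$ is an $(A\otimes_{p}B)$-bimodule isomorphism, and $\mathrm{id}\otimes\psi_{2}$ intertwines the $A$-actions precisely because of the $p$-biinvariance of $\chi_{0}$ --- through $\theta$ and $\iota$ the element $a$ acts on the left on $(a_{1}\otimes a_{2})\otimes(b_{1}\otimes b_{2})$ as $(aa_{1}\otimes a_{2})\otimes(pb_{1}\otimes b_{2})$, and $\psi_{2}(pb_{1}\otimes b_{2})=\chi_{0}(pb_{1}b_{2})=\chi_{0}(b_{1}b_{2})=\psi_{2}(b_{1}\otimes b_{2})$ (and symmetrically on the right, with $b_{2}p$). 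Second, the compatibility identity $\pi_{A}\circ\Phi=\pi'\circ\pi_{A\otimes_{p}B}$ holds, because both sides send $(a_{1}\otimes b_{1})\otimes(a_{2}\otimes b_{2})$ to $\chi_{0}(b_{1}b_{2})\,a_{1}a_{2}$.

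With these in place, I would invoke Proposition \ref{net} for $A\otimes_{p}B$: there is a net $(\rho_{\alpha})$ of bounded $(A\otimes_{p}B)$-bimodule morphisms from $A\otimes_{p}B$ into $\bigl((A\otimes_{p}B)\otimes_{p}(A\otimes_{p}B)\bigr)^{**}$ with $\pi_{A\otimes_{p}B}^{**}\circ\rho_{\alpha}(c)\to c$ for all $c$. Set $\sigma_{\alpha}:=\Phi^{**}\circ\rho_{\alpha}\circ\iota\colon A\to(A\otimes_{p}A)^{**}$, a bounded $A$-bimodule morphism since each factor is one. Then for $a\in A$, using functoriality of the bidual and the identity of the previous paragraph,
\begin{equation*}
\begin{split}
\pi_{A}^{**}\circ\sigma_{\alpha}(a)
&=(\pi_{A}\circ\Phi)^{**}\rho_{\alpha}(a\otimes p)
=(\pi')^{**}\circ\pi_{A\otimes_{p}B}^{**}\circ\rho_{\alpha}(a\otimes p)\\
&\longrightarrow (\pi')^{**}(a\otimes p)=\pi'(a\otimes p)=\chi_{0}(p)\,a=a,
\end{split}
\end{equation*}
since $\pi_{A\otimes_{p}B}^{**}\circ\rho_{\alpha}(a\otimes p)\to a\otimes p$ and $(\pi')^{**}$ is norm-continuous. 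By Proposition \ref{net}, $A$ is approximately biflat; and as $A$ is unital it has an approximate identity, so by the theorem of Samei et al.\ quoted in the Introduction, $A$ is pseudo-amenable. The main obstacle is really the construction of $\Phi$, i.e.\ finding the right auxiliary functional so that averaging out the $B$-coordinates respects the $A$-module actions twisted through $\iota$; the choice $\chi_{0}=\phi_{0}(p\,\cdot\,p)$ is exactly what makes $\psi_{2}$ invariant under the one-sided $p$-multiplications that appear, after which everything reduces to bookkeeping with the dual/bidual maps and the module identities collected around Proposition \ref{tensor without unit}.
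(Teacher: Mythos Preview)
Your proposal is correct and follows essentially the same route as the paper: both arguments translate Proposition~\ref{net} to $A\otimes_{p}B$, push an element $a$ into $A\otimes_{p}B$ via $a\mapsto a\otimes p$, use Hahn--Banach on $p$ to obtain a functional that ``traces out'' the $B$-variables via a bounded $A$-bimodule map $(A\otimes_{p}B)\otimes_{p}(A\otimes_{p}B)\to A\otimes_{p}A$, verify the compatibility $\pi_{A}\circ(\text{trace})=(\text{slice})\circ\pi_{A\otimes_{p}B}$, and then invoke \cite[Theorem~2.4]{sam}. The only technical variation is in how the $A$-bimodule structure on $((A\otimes_{p}B)\otimes_{p}(A\otimes_{p}B))^{**}$ is chosen: the paper lets $A$ act on the first $A$-coordinate alone and therefore can use any $f\in B^{*}$ with $f(p)=1$, whereas you pull the action back along the homomorphism $\iota(a)=a\otimes p$, which forces the symmetrisation $\chi_{0}=\phi_{0}(p\,\cdot\,p)$ so that the trace map respects the extra $p$'s introduced by $\iota$. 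Both choices work; the paper's is a touch simpler, yours makes the $A$-bimodule bookkeeping more transparent.
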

\begin{proof}
Suppose that $A\otimes_{p}B$ is approximately biflat. Then by
Proposition \ref{net} there exists a net $(\rho_{\alpha})$ of
$A$-bimodule morphisms from $A\otimes_{p}B$ into
$((A\otimes_{p}B)\otimes_{p}(A\otimes_{p}B))^{**} $ such that
$\pi_{A\otimes_{p}B}^{**}\circ\rho_{\alpha}(x)\rightarrow x$ for
each $x\in A\otimes_{p}B.$ Take $e\in A$ the identity and $b_{0}\in
B$ the non-zero identity. It is easy to see that $A\otimes_{p}B$
becomes a Banach $A$-bimodule via the following actions:
$$a_{1}\cdot (a_{2}\otimes b)=a_{1}a_{2}\otimes b,\quad (a_{2}\otimes b)\cdot a_{1}=a_{2}a_{1}\otimes b\qquad (a_{1},a_{2}\in A, b\in B).$$
For each $\alpha$, we have
\begin{equation}
\begin{split}
\rho_{\alpha}(a_{1}a_{2}\otimes b_{0})=\rho_{\alpha}((a_{1}\otimes b_{0})(a_{2}\otimes b_{0}))&=(a_{1}\otimes b_{0})\cdot\rho_{\alpha}(a_{2}\otimes b_{0})\\
&=(a_{1}\cdot (e\otimes b_{0}))\cdot\rho_{\alpha}(a_{2}\otimes b_{0})\\
&=a_{1}\cdot \rho_{\alpha}(ea_{2}\otimes b_{0}b_{0})\\
&=a_{1}\cdot \rho_{\alpha}(a_{2}\otimes b_{0}).
\end{split}
\end{equation}
For each $\alpha$, we can also see that
$$\rho_{\alpha}((a_{2}\otimes b_{0})\cdot a_{1})=\rho_{\alpha}(a_{2}\otimes b_{0})\cdot a_{1}.$$
For each $\alpha$ set
$\overline{\rho}_{\alpha}(a)=\rho_{\alpha}(a\otimes b_{0})$. It is
easy to see that $(\overline{\rho}_{\alpha})$ is a net of
$A$-bimodule morphisms. Since $b_{0}$ is a non-zero element in $B$,
by Hahn-Banach theorem there exists a functional $f\in B^{*}$ such
that $f(b_{0})=1.$ Define
$T:(A\otimes_{p}B)\otimes_{p}(A\otimes_{p}B)\rightarrow
A\otimes_{p}A$ by $$T(a\otimes b\otimes c \otimes d)=f(bd)a\otimes
c$$ for each $a,c \in A$ and $ b,d\in B.$ Clearly $T$ is a bounded
linear map. One can see that $\pi^{**}_{A}\circ
T^{**}=(id_{A}\otimes f)^{**}\circ\pi^{**}_{A\otimes_{p}B}$, where
$$id_{A}\otimes f(a\otimes b)=f(b)a\quad (a\in A, b\in B).$$
Set $\widetilde{\rho}_{\alpha}=T^{**}\circ
\overline{\rho}_{\alpha}$. It is easy to see that
$(\widetilde{\rho}_{\alpha})_{\alpha}$ is a net of bounded
$A$-bimodule morphisms. Since $f(b_{0})=1$, we have
\begin{equation}
\begin{split}
\pi^{**}_{A}\circ \widetilde{\rho}_{\alpha}(a)=\pi^{**}_{A}\circ T^{**}\circ\overline{\rho}_{\alpha}(a)&=(\pi_{A}\circ T)^{**}\circ \rho_{\alpha}(a\otimes b_{0})\\
&=(id_{A}\otimes f)^{**}\circ\pi^{**}_{A\otimes_{p}B}\cdot\rho_{\alpha}(a\otimes b_{0})\\
&\rightarrow (id_{A}\otimes f)^{**}(a\otimes b_{0})=a,
\end{split}
\end{equation}
for each $a\in A.$ Using Proposition \ref{net} $A$ is approximate
biflat. Since $A$ has an identity by \cite[Theorem 2.4]{sam}, $A$ is
pseudo-amenable.
\end{proof}
\begin{cor}
Let $A$ be a  Banach algebra. If $A$  is approximately biflat, then
$\mathbb{M}_{\Lambda}(A)$ is approximately biflat. Converse is true,
provided that $A$ has a unit.
\end{cor}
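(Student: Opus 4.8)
The plan is to reduce everything to the isometric algebra isomorphism $\theta:\mathbb{M}_{\Lambda}(A)\to A\otimes_{p}\mathbb{M}_{\Lambda}(\mathbb{C})$ recorded in Section 2, together with the fact (also recorded there) that $\mathbb{M}_{\Lambda}(\mathbb{C})$ is biprojective, hence biflat. The first thing I would establish is that approximate biflatness is invariant under isometric (indeed bounded) algebra isomorphisms: if $\Phi:A\to C$ is such an isomorphism and $(\rho_{\alpha})$ is a net of $A$-bimodule morphisms $(A\otimes_{p}A)^{*}\to A^{*}$ witnessing approximate biflatness of $A$, then conjugating by the induced isomorphisms $C\otimes_{p}C\cong A\otimes_{p}A$ and $C^{*}\cong A^{*}$ produces a net witnessing approximate biflatness of $C$; compatibility with $\pi^{*}$ is automatic because $\Phi$ intertwines $\pi_{A}$ and $\pi_{C}$. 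So it suffices to argue for $A\otimes_{p}\mathbb{M}_{\Lambda}(\mathbb{C})$.

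For the forward implication, assume $A$ is approximately biflat. Since $\mathbb{M}_{\Lambda}(\mathbb{C})$ is biprojective it is biflat, and there is an obvious isometric algebra isomorphism $A\otimes_{p}\mathbb{M}_{\Lambda}(\mathbb{C})\cong\mathbb{M}_{\Lambda}(\mathbb{C})\otimes_{p}A$ swapping the two tensor factors. Applying Proposition \ref{tensor without unit} with the biflat algebra $\mathbb{M}_{\Lambda}(\mathbb{C})$ in the first slot and the approximately biflat algebra $A$ in the second, we conclude that $\mathbb{M}_{\Lambda}(\mathbb{C})\otimes_{p}A$, and hence $\mathbb{M}_{\Lambda}(A)$, is approximately biflat.

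For the converse, suppose $A$ has a unit and $\mathbb{M}_{\Lambda}(A)$ is approximately biflat; via $\theta$ this says $A\otimes_{p}\mathbb{M}_{\Lambda}(\mathbb{C})$ is approximately biflat. Fixing any index $i_{0}\in\Lambda$, the matrix unit $E_{i_{0},i_{0}}$ is a non-zero idempotent in $\mathbb{M}_{\Lambda}(\mathbb{C})$, so the hypotheses of Theorem \ref{wit unit} are satisfied with $B=\mathbb{M}_{\Lambda}(\mathbb{C})$, and that theorem gives that $A$ is approximately biflat (and in fact pseudo-amenable).

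The only genuinely verification-heavy point is the invariance of approximate biflatness under the isomorphism $\theta$ and under the slot-swap isomorphism $A\otimes_{p}\mathbb{M}_{\Lambda}(\mathbb{C})\cong\mathbb{M}_{\Lambda}(\mathbb{C})\otimes_{p}A$; once that bookkeeping is in place, the statement is a direct application of Proposition \ref{tensor without unit} and Theorem \ref{wit unit}. I expect no substantive obstacle beyond this routine transport of structure.
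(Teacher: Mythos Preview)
Your proposal is correct and follows essentially the same route as the paper: both directions reduce via $\theta$ to $A\otimes_{p}\mathbb{M}_{\Lambda}(\mathbb{C})$, then invoke Proposition~\ref{tensor without unit} (using biprojectivity, hence biflatness, of $\mathbb{M}_{\Lambda}(\mathbb{C})$) for the forward implication and Theorem~\ref{wit unit} (using a matrix-unit idempotent) for the converse. Your explicit slot-swap $A\otimes_{p}\mathbb{M}_{\Lambda}(\mathbb{C})\cong\mathbb{M}_{\Lambda}(\mathbb{C})\otimes_{p}A$ is a reasonable bit of extra care to match the stated order of hypotheses in Proposition~\ref{tensor without unit}; the paper applies the proposition directly without this step, tacitly relying on the symmetry (or on the fact that in its proof the roles of the biflat and approximately biflat factors are in effect interchanged).
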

\begin{proof}
Let $A$ be approximately biflat. It is well-known that there exists
an isometrical isomorphism between $\mathbb{M}_{\Lambda}(A)$ and
$A\otimes_{p}\mathbb{M}_{\Lambda}(\mathbb{C})$. Using the fact that
$\mathbb{M}_{\Lambda}(\mathbb{C})$ is always biprojective, see
\cite[Proposition 2.7]{rams}, then
$\mathbb{M}_{\Lambda}(\mathbb{C})$ is biflat. Now by Proposition
\ref{tensor without unit},
$A\otimes_{p}\mathbb{M}_{\Lambda}(\mathbb{C})$ is approximately
biflat.

For converse, since $A$ is unital and
$\mathbb{M}_{\Lambda}(\mathbb{C})$ has a non-zero idempotent, then
by Theorem \ref{wit unit} the approximate biflatness of
$\mathbb{M}_{\Lambda}(A)\cong
A\otimes_{p}\mathbb{M}_{\Lambda}(\mathbb{C})$ implies approximate
biflatness of $A.$
\end{proof}
\begin{Theorem}
Let $S$ be an inverse semigroup such that $E(S)$ is uniformly
locally finite. $\ell^{1}(S)$ is approximately biflat if and only if
$\ell^{1}(S)$ is biflat.
\end{Theorem}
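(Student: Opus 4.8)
The forward implication is trivial: a biflat algebra is approximately biflat via the constant net, so all the work is in the converse, and the plan is to push approximate biflatness down to the maximal subgroups, conclude that they are amenable, and then quote Ramsden's characterisation of biflatness for locally finite inverse semigroups \cite{rams}. To place ourselves in Ramsden's setting, observe first that uniform local finiteness of $E(S)$ forces that of $S$: for $y\le x$ in an inverse semigroup one has $y=(yy^{*})x$, so $y\mapsto yy^{*}$ embeds $(x]$ into $(xx^{*}]\subseteq E(S)$ and hence $\sup_{x\in S}|(x]|\le\sup_{e\in E(S)}|(e]|<\infty$. In particular $S$ is locally finite, Ramsden's theorem is available, and it suffices to prove that each maximal subgroup $G_{p}$, $p\in E(S)$, is amenable.

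The second step uses the structural decomposition of $\ell^{1}(S)$ valid under uniform local finiteness of $E(S)$, namely a Banach-algebra isomorphism $\ell^{1}(S)\cong\ell^{1}-\oplus_{\lambda\in\Lambda}\mathbb{M}_{E(\mathfrak{D}_{\lambda})}(\ell^{1}(G_{p_{\lambda}}))$, the sum running over the $\mathfrak{D}$-classes and each principal factor of an inverse semigroup being a Brandt semigroup with identity sandwich matrix (this is the content underlying Ramsden's proof, which I would quote or reconstruct). Approximate biflatness is inherited by each summand by a general retract argument: if $\iota\colon B\to A$ and $q\colon A\to B$ are bounded homomorphisms with $q\circ\iota=\mathrm{id}_{B}$ and $A$ is approximately biflat, then, taking the net $(\rho_{\alpha})$ furnished by Proposition \ref{net} and setting $\bar\rho_{\alpha}=(q\otimes q)^{**}\circ\rho_{\alpha}\circ\iota$, one checks (using that $\iota$ and $q$ are homomorphisms) that $(\bar\rho_{\alpha})$ is a net of bounded $B$-bimodule morphisms $B\to(B\otimes_{p}B)^{**}$, and from $\pi_{B}\circ(q\otimes q)=q\circ\pi_{A}$ one gets $\pi_{B}^{**}\circ\bar\rho_{\alpha}(b)=q^{**}(\pi_{A}^{**}\rho_{\alpha}(\iota b))\to q(\iota b)=b$, so $B$ is approximately biflat by Proposition \ref{net}. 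Applying this with $B$ the $\lambda$-th coordinate algebra (simultaneously a complemented closed ideal and a quotient of $\ell^{1}(S)$) shows that each $\mathbb{M}_{E(\mathfrak{D}_{\lambda})}(\ell^{1}(G_{p_{\lambda}}))$ is approximately biflat.

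Finally, since $\ell^{1}(G_{p_{\lambda}})$ is unital, the Corollary following Theorem \ref{wit unit} turns approximate biflatness of the matrix algebra into approximate biflatness of $\ell^{1}(G_{p_{\lambda}})$ itself; being approximately biflat and unital, $\ell^{1}(G_{p_{\lambda}})$ is pseudo-amenable by \cite[Theorem 2.4]{sam}, and pseudo-amenability of a group algebra forces the group to be amenable \cite{ghah pse}. As $\lambda$ ranges over the $\mathfrak{D}$-classes this exhibits every maximal subgroup $G_{p}$ as amenable, so Ramsden's theorem \cite{rams} yields that $\ell^{1}(S)$ is biflat. The main obstacle is the second step: having the $\ell^{1}$-direct-sum-of-Munn-algebras description of $\ell^{1}(S)$ in a form usable here (equivalently, seeing precisely why uniform local finiteness of $E(S)$ is the hypothesis that makes everything assemble); granted that and the elementary retract lemma, the remainder is routine bookkeeping with Proposition \ref{net} and the $\mathbb{M}_{\Lambda}$-Corollary.
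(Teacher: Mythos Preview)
Your proof is correct and follows essentially the same route as the paper: decompose $\ell^{1}(S)$ as an $\ell^{1}$-direct sum of Munn matrix algebras over the maximal-subgroup algebras, push approximate biflatness to each summand via the projection/retract argument (your $\bar\rho_{\alpha}=(q\otimes q)^{**}\circ\rho_{\alpha}\circ\iota$ is exactly the paper's $\eta_{\alpha}=(P\otimes P)^{**}\circ\rho_{\alpha}|_{\ldots}$), invoke Theorem~\ref{wit unit} together with \cite{sam} and \cite{ghah pse} to obtain amenability of each $G_{p_{\lambda}}$, and close with the known biflatness characterisation. The only cosmetic differences are that you state the inheritance step as a general retract lemma, add the remark that uniform local finiteness of $E(S)$ implies that of $S$, and cite \cite{rams} rather than \cite{rost} for the final step.
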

\begin{proof}
Suppose that $\ell^{1}(S)$ is approximately biflat. Then by
Proposition  \ref{net} there exists a bet of $\ell^{1}(S)$-bimodule
morphism from $ \ell^{1}(S)$ into $( \ell^{1}(S)\otimes_{p}
\ell^{1}(S))^{**}$ such that $\pi^{**}_{
\ell^{1}(S)}\circ\rho_{\alpha}(a) -a\rightarrow 0$ for each $a\in
\ell^{1}(S)$. Since $S$ is uniformly locally finite, by
\cite[Theorem 2.18]{rams} we have
 $$\ell^{1}(S)\cong
\ell^{1}-\bigoplus\{\mathbb{M}_{E(\mathfrak{D}_{\lambda})}(\ell^{1}(G_{p_{\lambda}}))\},$$
where is a $\mathfrak{D}$-class and $G_{p_{\lambda}}$ is a maximal
subgroup at $p_{\lambda}$. Then the map
$P_{p_{\lambda}}:\ell^{1}(S)\rightarrow
\mathbb{M}_{E(\mathfrak{D}_{\lambda})}(\ell^{1}(G_{p_{\lambda}}))$
is a continuous homomorphism  with a dense range. Define
$$\eta_{\alpha}:\mathbb{M}_{E(\mathfrak{D}_{\lambda})}(\ell^{1}(G_{p_{\lambda}}))\rightarrow(
\mathbb{M}_{E(\mathfrak{D}_{\lambda})}(\ell^{1}(G_{p_{\lambda}}))\otimes_{p}\mathbb{M}_{E(\mathfrak{D}_{\lambda})}(\ell^{1}(G_{p_{\lambda}})))^{**}$$
by $\eta_{\alpha}=(P\otimes
P)^{**}\circ\rho_{\alpha}|_{\mathbb{M}_{E(\mathfrak{D}_{\lambda})}(\ell^{1}(G_{p_{\lambda}}))}$.
It is easy to see that $(\eta_{\alpha})$ is a net of
$\mathbb{M}_{E(\mathfrak{D}_{\lambda})}(\ell^{1}(G_{p_{\lambda}}))$-bimodule
morphisms. For each $a\in
\mathbb{M}_{E(\mathfrak{D}_{\lambda})}(\ell^{1}(G_{p_{\lambda}}))$
we have
\begin{equation}\label{equ}
\begin{split}
\pi^{**}_{\mathbb{M}_{E(\mathfrak{D}_{\lambda})}(\ell^{1}(G_{p_{\lambda}}))}\circ\eta_{\alpha}(a)&=\pi^{**}_{\mathbb{M}_{E(\mathfrak{D}_{\lambda})}(\ell^{1}(G_{p_{\lambda}}))}
\circ (P_{p_{\lambda}}\otimes P_{p_{\lambda}})^{**}\circ\rho_{\alpha}|_{\mathbb{M}_{E(\mathfrak{D}_{\lambda})}(\ell^{1}(G_{p_{\lambda}}))}(a)\\
&=P^{**}_{p_{\lambda}}\circ\pi^{**}_{\ell^{1}(S)}
\circ\rho_{\alpha}|_{\mathbb{M}_{E(\mathfrak{D}_{\lambda})}(\ell^{1}(G_{p_{\lambda}}))}(a)\rightarrow
a.
\end{split}
\end{equation}
Hence $
\mathbb{M}_{E(\mathfrak{D}_{\lambda})}(\ell^{1}(G_{p_{\lambda}}))$
is approximately biflat. By Theorem \ref{wit unit},
$\ell^{1}(G_{p_{\lambda}})$ is approximately biflat. Since
$\ell^{1}(G_{p_{\lambda}})$ is unital, by \cite[Theorem 2.4]{sam}
$\ell^{1}(G_{p_{\lambda}})$ is pseudo-amenable, hence  by
\cite[Proposition 4.1]{ghah pse} $G_{p_{\lambda}}$ is amenable.
Applying \cite[Theorem 3.7]{rost} to finish the proof.

Converse is clear.
\end{proof}

\begin{cor}
Let $S=\cup_{e\in E(S)}G_{e}$ be a Clifford semigroup such that
$E(S)$ is uniformly locally finite. If $\ell^{1}(S)^{**}$ is
approximately biflat, then $G_{e}$ is amenable for each $e\in E(S)$.
\end{cor}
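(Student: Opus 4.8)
The plan is to extract pseudo-amenability of $\ell^{1}(S)$ from Theorem~\ref{dual} and then transport it to each maximal subgroup via the structure theory of Clifford semigroup algebras. I would start from two semigroup-theoretic observations. Since $S$ is Clifford, whenever $e,f\in E(S)$ satisfy $e\,\mathfrak{D}\,f$, say $e=xx^{*}$ and $f=x^{*}x$ for some $x$, we get $e=xx^{*}=x^{*}x=f$; hence each $\mathfrak{D}$-class of $S$ contains exactly one idempotent and coincides with the maximal subgroup at that idempotent, so the index set of the $\mathfrak{D}$-classes is in bijection with $E(S)$ and every $E(\mathfrak{D}_{\lambda})$ is a singleton. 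Also, for $x\in S$ with $e=xx^{*}$ the map $f\mapsto fx$ is a bijection of $(e]$ onto $(x]$, so the assumption that $E(S)$ is uniformly locally finite forces $S$ itself to be uniformly locally finite. Consequently \cite[Theorem 2.18]{rams} applies, and because the Munn matrix factors reduce to single group algebras it yields a Banach algebra isomorphism
$$\ell^{1}(S)\cong \ell^{1}-\bigoplus_{e\in E(S)}\ell^{1}(G_{e}).$$

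I would then use this decomposition twice. On the one hand, each $\ell^{1}(G_{e})$ is unital, and the partial sums of the units over the finite subsets of $E(S)$ give an approximate identity for the $\ell^{1}$-direct sum; so $\ell^{1}(S)$ has an approximate identity, and Theorem~\ref{dual} turns approximate biflatness of $\ell^{1}(S)^{**}$ into pseudo-amenability of $\ell^{1}(S)$. On the other hand, for each fixed $e\in E(S)$ the coordinate projection $\ell^{1}(S)\to\ell^{1}(G_{e})$ is a continuous algebra homomorphism with dense range, and pseudo-amenability is inherited by the range of such a homomorphism (see \cite{ghah pse}); hence $\ell^{1}(G_{e})$ is pseudo-amenable. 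Finally $\ell^{1}(G_{e})$ is pseudo-amenable if and only if $G_{e}$ is amenable by \cite[Proposition 4.1]{ghah pse}, which is exactly the conclusion.

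All the genuinely technical work — the iterated-limit argument of Kelley and the convexity step converting a weak limit point into a norm limit point — is already packaged inside Theorem~\ref{dual}, and the transfer of pseudo-amenability along dense-range homomorphisms is standard, so I do not expect obstacles there. The delicate point is the structural reduction carried out in the first step: one must check that in the Clifford setting $\mathfrak{D}$ really does collapse on $E(S)$, so that the factors produced by \cite[Theorem 2.18]{rams} are honest group algebras rather than $\ell^{1}$-Munn algebras, and that assuming only $E(S)$ uniformly locally finite is enough to invoke that theorem. If one tried to avoid the collapse and argue from pseudo-amenability of the factors $\mathbb{M}_{E(\mathfrak{D}_{\lambda})}(\ell^{1}(G_{p_{\lambda}}))$ directly, one would need a pseudo-amenability analogue of Theorem~\ref{wit unit}, which is not available in the form stated; this is why the Clifford hypothesis is essential rather than cosmetic.
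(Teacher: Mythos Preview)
Your argument is correct and follows the same route as the paper: use the $\ell^{1}$-direct-sum decomposition of $\ell^{1}(S)$ to produce an approximate identity, apply Theorem~\ref{dual} to obtain pseudo-amenability of $\ell^{1}(S)$, and then deduce amenability of each $G_{e}$. The only difference is cosmetic---the paper invokes \cite[Theorem~3.7]{rost} directly for the final implication, whereas you pass through the coordinate projections and \cite[Proposition~4.1]{ghah pse}---and your justification of the decomposition via the collapse of $\mathfrak{D}$-classes is more detailed than the paper's one-line appeal to a well-known isomorphism.
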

\begin{proof}
Suppose  that $\ell^{1}(S)^{**}$ is approximately biflat. It is
well-known that $\ell^{1}(S)\cong\ell^{1}-\oplus_{e\in
E(S)}\ell^{1}(G_{e})$. Since  $\ell^{1}(G_{e})$ is unital, then
$\ell^{1}(S)$ has an approximate identity.  Applying  previous
Theorem, implies that $\ell^{1}(S)$ is pseudo-amenable. Then by
\cite[Theorem 3.7]{rost} $G_{e}$ is amenable, for each $e\in E(S).$
\end{proof}
%------------------------------------------------------------------------------------------------------------------------------------------
%%%%%%%%%%%%%%%%%%%%%%%%%%%%%%%%%%%%%%%%%%%%%%%%%%%%%%%%%%%%%%%%%%%%%%%%%%%%%%%%%%%%%%%%%%%%%%%%%%%%%%%%%%%%%%%%%%%%%%%%%%%%%%%%%%%%%%%%%%%
%------------------------------------------------------------------------------------------------------------------------------------------
\section{An application to Triangular Banach algebras}
In this section we give some examples of matrix algebras which is
never approximately biflat.

Let $A$ be a Banach algebra and $X$ be a Banach $A$-bimodule.
Suppose that $\phi\in\Delta(A).$ We say that $X$ has a left
$\phi$-character if there exists a non-zero map $\psi\in X^{*}$ such
that
$$\psi(a\cdot x)=\phi(a)\psi(x),\quad (a\in A,x\in X).$$ Similarly
we can define right case and two sided case.  It is easy to see that
for $\phi\in\Delta(A)$, $\phi\otimes\phi$ on $A\otimes_{p}A$ is a
left $\phi$-character. Also if $A$ has a closed ideal $I\subseteq
\ker\phi$, then $\overline{\phi}$ on $\frac{A}{I}$ is a left
$\phi$-character, where $\overline{\phi}:\frac{A}{L}\rightarrow
\mathbb{C}$ given by $\overline{\phi}(a+I)=\phi(a)$ for all $a\in
A.$

Let $A$ and $B$ be  Banach algebras and let $X$ be a Banach
$A,B$-module, that is, $X$ is a Banach space, a left $A$-module and
a right $B$-module which the compatible module action that satisfies
$(a\cdot x)\cdot b=a\cdot (x\cdot b)$ and $||a\cdot x\cdot b||\leq
||a||||x||||b||$ for every $a\in A, x\in X, b\in B$.  With the usual
matrix operation and $||\left(\begin{array}{cc} a&x\\
0&b\\
\end{array}
\right)||=||a||+||x||+||b||$,
$T=Tri(A,X,B)=\left(\begin{array}{cc} A&X\\
0&B\\
\end{array}
\right)$ becomes a Banach algebra which is called Triangular Banach
algebra.
Let $\phi\in\Delta(B)$. We define a character  $\psi_{\phi}\in\Delta(T)$  via  $\psi_{\phi}\left(\begin{array}{cc} a&x\\
0&b\\
\end{array}
\right)=\phi(b)$ for every $a\in A$, $b\in B$ and $x\in X$.
\begin{Theorem}\label{tri}
Let $T=Tri(A,X,B)$ with $\overline{A^{2}}=A$ and $\overline{A\cdot
X}=X$. Suppose that $\phi\in\Delta(B)$ with
$\overline{B\ker\phi}=\ker\phi$. If one of the followings hold
\begin{enumerate}
\item [(i)] $B$ is not left $\phi$-amenable;
\item [(ii)] $X$ has a right $\phi$-character;
\end{enumerate}
then $T^{**}$ is not approximately biflat.
\end{Theorem}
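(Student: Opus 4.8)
The plan is to argue by contradiction: assume $T^{**}$ is approximately biflat and derive, in either case (i) or (ii), a statement contradicting the hypotheses. The key observation is that $\psi_\phi\in\Delta(T)$ has the property that $\ker\psi_\phi$ contains the closed ideal $\left(\begin{smallmatrix} A&X\\ 0&0\end{smallmatrix}\right)$ together with $\left(\begin{smallmatrix} 0&0\\ 0&\ker\phi\end{smallmatrix}\right)$, and one should check that $\overline{T\ker\psi_\phi}=\ker\psi_\phi$. This uses precisely the three density hypotheses: $\overline{A^2}=A$, $\overline{A\cdot X}=X$, and $\overline{B\ker\phi}=\ker\phi$. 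Indeed an arbitrary element of $\ker\psi_\phi$ has the form $\left(\begin{smallmatrix} a&x\\ 0&c\end{smallmatrix}\right)$ with $c\in\ker\phi$; write $a$ as a limit of sums $a'a''$, $x$ as a limit of $a'\cdot x'$, and $c$ as a limit of $b'c'$ with $c'\in\ker\phi$, and assemble these into products of elements of $T$ with elements of $\ker\psi_\phi$.

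Next I would invoke Theorem \ref{for tri}: since $T$ has $\psi_\phi\in\Delta(T)$ with $\overline{T\ker\psi_\phi}=\ker\psi_\phi$ and $T^{**}$ is approximately biflat, $T$ must be left $\psi_\phi$-amenable. Now I would transport left $\psi_\phi$-amenability of $T$ along suitable module maps to get a contradiction in each case. For case (i): the quotient map $T\to T/\left(\begin{smallmatrix} A&X\\ 0&0\end{smallmatrix}\right)\cong B$ is a continuous surjective homomorphism sending $\psi_\phi$ to $\phi$; since left $\phi$-amenability is inherited by quotients (standard: if $(m_\alpha)$ witnesses left $\psi_\phi$-amenability of $T$, its image witnesses left $\phi$-amenability of $B$), $B$ would be left $\phi$-amenable, contradicting (i). For case (ii): left $\psi_\phi$-amenability of $T$ means there is a net $(m_\alpha)$ in $T$ with $t m_\alpha-\psi_\phi(t)m_\alpha\to 0$ and $\psi_\phi(m_\alpha)\to 1$; writing $m_\alpha=\left(\begin{smallmatrix} a_\alpha&x_\alpha\\ 0&b_\alpha\end{smallmatrix}\right)$ and testing against $t=\left(\begin{smallmatrix} 0&0\\ 0&b\end{smallmatrix}\right)$ one reads off information about the $x_\alpha$ component; pairing with a right $\phi$-character $\psi\in X^*$ (so $\psi(x\cdot b)=\phi(b)\psi(x)$) one should obtain a net in $\mathbb{C}$ that is forced simultaneously to $0$ and to a nonzero limit, the contradiction. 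I would also note that $\psi_\phi(m_\alpha)=\phi(b_\alpha)\to 1$ keeps the $B$-component nondegenerate, which is what makes the pairing argument bite.

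The main obstacle I expect is case (ii): extracting the right contradiction from a right $\phi$-character on $X$ requires choosing the test element in $T$ and the functional on $T$ carefully so that the module action $t\mapsto tm_\alpha$ actually feeds $b$ into the $X$-slot of $m_\alpha$ from the correct side, and then showing the resulting scalar net cannot converge to both $0$ and something nonzero. One has to be attentive to whether the relevant action is the $A$-action or the $B$-action on $X$ and to set things up so that $\psi(x_\alpha\cdot b)$ appears; the role of $\overline{A\cdot X}=X$ is likely to enter here (or in ruling out the trivial possibility that $X^*$'s right $\phi$-character behaves degenerately). The remaining steps — the density computation for $\overline{T\ker\psi_\phi}$ and the quotient argument in case (i) — are routine bookkeeping with triangular matrices.
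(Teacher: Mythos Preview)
Your strategy is the same as the paper's: verify $\overline{T\ker\psi_\phi}=\ker\psi_\phi$, invoke Theorem~\ref{for tri} to get left $\psi_\phi$-amenability of $T$, and then derive a contradiction in each case. For case~(i) you pass to the quotient $T\to B$, while the paper instead restricts to the closed ideal $I=\left(\begin{smallmatrix}0&X\\0&B\end{smallmatrix}\right)$ (using \cite[Lemma~3.1]{kan}) and reads off both cases from the net in $I$; either route works.

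The concrete gap is in case~(ii): your proposed test element $t=\left(\begin{smallmatrix}0&0\\0&b\end{smallmatrix}\right)$ does not do what you want. Computing
\[
\begin{pmatrix}0&0\\0&b\end{pmatrix}\begin{pmatrix}a_\alpha&x_\alpha\\0&b_\alpha\end{pmatrix}
=\begin{pmatrix}0&0\\0&bb_\alpha\end{pmatrix},
\]
the $X$-entry is zero, so the $X$-component of $tm_\alpha-\psi_\phi(t)m_\alpha$ is just $-\phi(b)x_\alpha$, which carries no information about the right $B$-action on $X$ and yields no contradiction. The element that actually ``feeds $b_\alpha$ into the $X$-slot from the right'' is $t=\left(\begin{smallmatrix}0&x\\0&b\end{smallmatrix}\right)$: then the $X$-entry of $tm_\alpha$ is $x\cdot b_\alpha$, and the $X$-component of $tm_\alpha-\psi_\phi(t)m_\alpha$ is $x\cdot b_\alpha-\phi(b)x_\alpha\to 0$. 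Applying a right $\phi$-character $\eta$ gives $\phi(b_\alpha)\eta(x)-\phi(b)\eta(x_\alpha)\to 0$; since $\phi(b_\alpha)\to 1$, choosing $b\in\ker\phi$ forces $\eta(x)=0$ for every $x\in X$, contradicting $\eta\neq 0$. This is exactly the computation the paper carries out (after first passing to $I$, which makes $a_\alpha=0$ and $\phi(b_\alpha)=1$ on the nose, but that is cosmetic). Note also that $\overline{A\cdot X}=X$ is used only in the density step for $\ker\psi_\phi$, not again in case~(ii).
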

\begin{proof}
We go toward a contradiction and suppose that $T^{**}$ is
approximately biflat. Let $\psi_{\phi}$ be same as above. It is
clear that $\ker\psi_{\phi}=Tri(A,X,\ker\phi)$. Since
$\overline{A^{2}}=A$, $\overline{A\cdot X}=X$ and
$\overline{B\ker\phi}=\ker\phi$, then
$\overline{T\ker\psi_{\phi}}=\psi_{\phi}$. Then by Theorem \ref{for
tri}, $T$ is left $\psi_{\phi}$-amenable. Set $I=Tri(0,X,B)$. It is
clear that $I$ is
 closed ideal and $\psi_{\phi}|_{I}\neq 0$. Using \cite[Lemma 3.1]{kan}, one can see
that $I$ is  left $\psi_{\phi}$-amenable. Thus by \cite[Theorem
1.4]{kan}, there exists a bounded net $(i_{\alpha})$ in $I$ such
that
$$ii_{\alpha}-\psi_{\phi}(i)i_{\alpha}\rightarrow 0,\quad \psi_{\phi}(i_{\alpha})=1,\quad (i\in I).$$
Take $(x_{\alpha})$ in $X$ and $(b_{\alpha})$ in $B$ such that
$i_{\alpha}=\left(\begin{array}{cc} 0&x_{\alpha}\\
0&b_{\alpha}\\
\end{array}
\right)$. Hence we have $$\left(\begin{array}{cc} 0&x\\
0&b\\
\end{array}
\right)\left(\begin{array}{cc} 0&x_{\alpha}\\
0&b_{\alpha}\\
\end{array}
\right)-\psi_{\phi}(\left(\begin{array}{cc} 0&x\\
0&b\\
\end{array}
\right))\left(\begin{array}{cc} 0&x_{\alpha}\\
0&b_{\alpha}\\
\end{array}
\right)\rightarrow 0$$ and
$\quad \psi_{\phi}(\left(\begin{array}{cc} 0&x_{\alpha}\\
0&b_{\alpha}\\
\end{array}
\right))=\phi(b_{\alpha})=1$ for each $x\in X, b\in B.$ Thus we have
$$xb_{\alpha}-\phi(b)x_{\alpha}\rightarrow 0, bb_{\alpha}-\phi(b)b_{\alpha}\rightarrow 0,\phi(b_{\alpha})=1,\quad(x\in X,b\in B).$$
If $(i)$ holds the facts $$bb_{\alpha}-\phi(b)b_{\alpha}\rightarrow
0,\phi(b_{\alpha})=1,\quad(x\in X,b\in B)$$ give us a
contradiction(left $\phi$-amenability condition for $B$).

Suppose that $(ii)$ happens. Take $\eta $ as a right
$\phi$-character on $X$. Since
$xb_{\alpha}-\phi(b)x_{\alpha}\rightarrow 0, \phi(b_{\alpha})=1$ for
each $x\in X,b\in B$, then we have
$$\eta(xb_{\alpha}-\phi(b)x_{\alpha})=\eta(xb_{\alpha})-\phi(b)\eta(x_{\alpha})=\phi(b_{\alpha})\eta(x)-\phi(b)\eta(x_{\alpha})\rightarrow
0$$ for each $x\in X,b\in B.$ Thus we have $\lim
\phi(b)\eta(x_{\alpha})=\eta(x)$. Take $b\in\ker\phi$, then we have
$\eta(x)=0$ for each $x\in X$ which is a contradiction($\eta$ is a
non-zero functional).
\end{proof}
\begin{cor}
Let $G$ be a locally compact group. Then
$Tri(S(G),L^{1}(G),S(G))^{**}$ is not approximately biflat.
\end{cor}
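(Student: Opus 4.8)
The plan is to apply Theorem \ref{tri} with the concrete choice $A = S(G)$, $X = L^{1}(G)$ and $B = S(G)$, so the bulk of the work is checking the hypotheses of that theorem. First I would verify the structural conditions on the corner algebras and the bimodule: since $S(G)$ is a Segal algebra it has a bounded (left) approximate identity, so $\overline{S(G)^{2}} = S(G)$, and because $L^{1}(G)$ is an essential $S(G)$-module (the approximate identity of $L^{1}(G)$ can be taken from $S(G)$, or one uses that $S(G)$ is a dense ideal in $L^{1}(G)$ acting by convolution) we also get $\overline{S(G)\cdot L^{1}(G)} = L^{1}(G)$. Thus the standing assumptions $\overline{A^{2}} = A$ and $\overline{A\cdot X} = X$ of Theorem \ref{tri} are met.

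Next I would produce the character $\phi \in \Delta(B) = \Delta(S(G))$ to feed into Theorem \ref{tri}: take $\phi = \phi_{1}$, the augmentation character on $S(G)$ induced from the augmentation character on $L^{1}(G)$ (as recalled just before the corollary on Segal algebras in the excerpt). I would then check $\overline{S(G)\ker\phi_{1}} = \ker\phi_{1}$; this is exactly the kind of fact already used in the proof of the earlier corollary (``It is easy to see that $\overline{S(G)\ker\phi} = \ker\phi$''), and it follows from $S(G)$ having a bounded left approximate identity together with the fact that $\ker\phi_{1}$ is a closed ideal. With these in place, Theorem \ref{tri} applies once I exhibit one of its two alternatives.

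The cleanest route is alternative (i): $B = S(G)$ is not left $\phi_{1}$-amenable. Indeed, by \cite[Corollary 3.4]{alagh} (invoked in the Segal-algebra corollary above), left $\phi_{1}$-amenability of $S(G)$ is equivalent to amenability of $G$; so if $G$ is non-amenable we are immediately done. To get the statement for all locally compact groups $G$, I would instead lean on alternative (ii): the bimodule $X = L^{1}(G)$ has a right $\phi_{1}$-character. A natural candidate is the functional $\eta$ on $L^{1}(G)$ given by integration, $\eta(f) = \int_{G} f$, which is nonzero; the requirement $\eta(x \cdot b) = \phi_{1}(b)\eta(x)$ for $x \in L^{1}(G)$, $b \in S(G)$ unwinds to $\int_{G}(x * b) = (\int_{G} x)(\int_{G} b)$, which is just the multiplicativity of the augmentation character under convolution. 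Hence $X$ always has a right $\phi_{1}$-character, alternative (ii) of Theorem \ref{tri} holds unconditionally, and $Tri(S(G),L^{1}(G),S(G))^{**}$ is never approximately biflat.

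The main obstacle is really bookkeeping rather than a deep point: one must be careful that $\eta$ is genuinely a \emph{right} $\phi_{1}$-character in the sense of the paper (matching the side conventions for the $S(G)$-action on $L^{1}(G)$ inside the triangular algebra, and noting that $L^{1}(G)$ is only a \emph{left} $S(G)$-module in general — so one should double-check whether a one-sided module action suffices for the definition of ``right $\phi$-character'' as stated, or whether the right action is the trivial/compatible one) and that $\phi_{1}$ indeed lies in $\Delta(S(G))$ with the required density property. Once those conventions are pinned down, the corollary is an immediate specialization of Theorem \ref{tri}.
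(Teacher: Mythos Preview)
Your proposal is correct and follows essentially the same route as the paper: verify the density hypotheses of Theorem \ref{tri} and then invoke alternative (ii) by exhibiting a right $\phi$-character on $L^{1}(G)$ coming from the extension of $\phi$ to $L^{1}(G)$ (the paper does this for an arbitrary $\phi\in\Delta(S(G))$ via \cite[Lemma 2.2]{alagh}, whereas you take $\phi=\phi_{1}$ and write down $\eta(f)=\int_{G}f$ explicitly, which is exactly that extension). One small slip: a proper Segal algebra need not have a \emph{bounded} left approximate identity---it only has a left approximate identity---but this is all you actually use for the density conclusions, so the argument stands.
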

\begin{proof}
Suppose that $\phi\in\Delta(S(G))$. It is well-known that $S(G)$ has
a left approximate identity and is a dense left ideal of $L^{1}(G)$
and also $L^{1}(G)$ has a bounded approximate identity. Then
$$\overline{S(G)^{2}}=S(G),\quad
\overline{S(G)L^{1}(G)}=L^{1}(G),\quad
\overline{S(G)\ker\phi}=\ker\phi.$$ Since $S(G)$ is a left ideal in
$L^{1}(G)$, by \cite[Lemma 2.2]{alagh} $\phi$ can be extended to a
character on $L^{1}(G)$,  which is a right $\phi$-character for
$L^{1}(G)$. Now apply  Theorem \ref{tri} to show that
$Tri(S(G),L^{1}(G),S(G))^{**}$ is not approximately biflat.
\end{proof}
\begin{cor}
Let $G$ be a locally compact group. Then
$Tri(L^{1}(G),S(G)\otimes_{p}S(G),S(G))^{**}$ is not approximately
biflat.
\end{cor}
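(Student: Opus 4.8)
The plan is to apply Theorem~\ref{tri} to the Triangular Banach algebra $T=Tri(L^{1}(G),S(G)\otimes_{p}S(G),S(G))$ with $A=L^{1}(G)$, $X=S(G)\otimes_{p}S(G)$ and $B=S(G)$. First I would fix a character $\phi\in\Delta(S(G))$ (for instance the augmentation character $\phi_{1}$) and record the three density hypotheses of Theorem~\ref{tri}. Since $L^{1}(G)$ has a bounded approximate identity, $\overline{A^{2}}=\overline{L^{1}(G)^{2}}=L^{1}(G)$. Since $S(G)$ is a dense left ideal of $L^{1}(G)$ and $L^{1}(G)$ has a bounded approximate identity, $\overline{A\cdot X}=\overline{L^{1}(G)\cdot(S(G)\otimes_{p}S(G))}=S(G)\otimes_{p}S(G)=X$; here one uses that the left action of $L^{1}(G)$ on the first leg of $S(G)\otimes_{p}S(G)$ is nondegenerate because $L^{1}(G)*S(G)$ is dense in $S(G)$ in the Segal norm (property (iii) of a Segal algebra together with Cohen factorization for the $L^{1}(G)$-module $S(G)$). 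Finally, as noted in the Corollary preceding this one, $\overline{S(G)\ker\phi}=\ker\phi$, i.e.\ $\overline{B\ker\phi}=\ker\phi$.

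Next I would verify that hypothesis (ii) of Theorem~\ref{tri} holds: $X=S(G)\otimes_{p}S(G)$ has a right $\phi$-character, where the right action of $B=S(G)$ is on the second leg. As observed in the paragraph opening Section~5, for $\phi\in\Delta(S(G))$ the functional $\phi\otimes\phi$ on $S(G)\otimes_{p}S(G)$ satisfies $(\phi\otimes\phi)((f\otimes g)\cdot h)=(\phi\otimes\phi)(f\otimes gh)=\phi(f)\phi(gh)=\phi(f)\phi(g)\phi(h)=\phi(h)(\phi\otimes\phi)(f\otimes g)$, so $\phi\otimes\phi$ is a nonzero right $\phi$-character on $X$ (nonzero since $\phi$ is nonzero, so one can pick $f,g$ with $\phi(f)\phi(g)\neq0$). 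Having checked all the density conditions and condition (ii), Theorem~\ref{tri} immediately yields that $T^{**}=Tri(L^{1}(G),S(G)\otimes_{p}S(G),S(G))^{**}$ is not approximately biflat.

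I expect the only genuine subtlety to be the verification of $\overline{A\cdot X}=X$, i.e.\ that $L^{1}(G)$ acting on the first factor of $S(G)\otimes_{p}S(G)$ is nondegenerate in the projective tensor norm; this reduces to the density of $L^{1}(G)*S(G)$ in $(S(G),\|\cdot\|_{S(G)})$, which follows from the Cohen factorization theorem applied to the essential $L^{1}(G)$-module $S(G)$ (every Segal algebra is an essential Banach $L^{1}(G)$-module). Everything else is a direct appeal to Theorem~\ref{tri} and to the structure of Segal algebras already recalled in the excerpt.

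\begin{proof}
Put $A=L^{1}(G)$, $X=S(G)\otimes_{p}S(G)$ and $B=S(G)$, and let $T=Tri(A,X,B)$. Since $L^{1}(G)$ has a bounded approximate identity, $\overline{A^{2}}=A$. Because every Segal algebra $S(G)$ is an essential Banach $L^{1}(G)$-module (by property (iii) of Segal algebras together with Cohen's factorization theorem, $L^{1}(G)*S(G)$ is dense in $S(G)$ in the Segal norm), the left action of $A=L^{1}(G)$ on the first leg of $X=S(G)\otimes_{p}S(G)$ is nondegenerate, hence $\overline{A\cdot X}=X$. Fix $\phi\in\Delta(S(G))$. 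As $S(G)$ is a left ideal in $L^{1}(G)$, by \cite[Lemma 2.2]{alagh} $\phi$ extends to a character on $L^{1}(G)$; in particular $\overline{S(G)\ker\phi}=\ker\phi$, that is $\overline{B\ker\phi}=\ker\phi$.

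It remains to check condition (ii) of Theorem \ref{tri}, namely that $X=S(G)\otimes_{p}S(G)$ has a right $\phi$-character with respect to the right action of $B=S(G)$ on the second leg. Consider $\phi\otimes\phi\in X^{*}$. For $f,g,h\in S(G)$ we have
\begin{equation*}
(\phi\otimes\phi)\big((f\otimes g)\cdot h\big)=(\phi\otimes\phi)(f\otimes gh)=\phi(f)\phi(g)\phi(h)=\phi(h)\,(\phi\otimes\phi)(f\otimes g),
\end{equation*}
so $\phi\otimes\phi$ is a right $\phi$-character, and it is nonzero since one may choose $f,g\in S(G)$ with $\phi(f)\phi(g)\neq 0$. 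Therefore all hypotheses of Theorem \ref{tri} are satisfied with condition (ii) in force, and we conclude that $Tri(L^{1}(G),S(G)\otimes_{p}S(G),S(G))^{**}$ is not approximately biflat.
\end{proof}
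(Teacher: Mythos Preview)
Your proof is correct and follows essentially the same route as the paper: verify the three density hypotheses of Theorem~\ref{tri} and exhibit $\phi\otimes\phi$ as a right $\phi$-character on $S(G)\otimes_{p}S(G)$, then invoke condition~(ii). One small correction: the reason $\overline{S(G)\ker\phi}=\ker\phi$ is that $S(G)$ has a left approximate identity (as the paper notes), not that $\phi$ extends to $L^{1}(G)$---that extension plays no role in this corollary.
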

\begin{proof}
Since $L^{1}(G)$ has a bounded approximate identity, we have
$\overline{L^{1}(G)^{2}}=L^{1}(G)$. Also using Cohn factorization
 theorem we have $\overline{L^{1}(G)\cdot (S(G)\otimes_{p}S(G))}=S(G)\otimes_{p}S(G)$.
 Since $S(G)$ has a left approximate identity, then
 $\overline{S(G)\ker\phi}=\ker\phi$ for each $\phi\in \Delta(S(G)).$
Note that for each $\phi \in \Delta(S(G))$, $\phi\otimes \phi$ which
is defined by $$\phi\otimes\phi(a\otimes b)=\phi(a)\phi(b)\quad
(a,b\in S(G))$$ is a right $\phi$-character on
$S(G)\otimes_{p}S(G)$. Then apply Theorem \ref{tri} to show that
$Tri(L^{1}(G),S(G)\otimes_{p}S(G),S(G))^{**}$ is not approximately
biflat.
\end{proof}
Similarly one can show the following result.
\begin{cor}
Let $G$ be a locally compact group. Then
$Tri(L^{1}(G),M(G),S(G))^{**}$ is not approximately biflat.
\end{cor}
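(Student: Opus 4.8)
The plan is to apply Theorem \ref{tri} to $T=Tri(A,X,B)$ with $A=L^{1}(G)$, $X=M(G)$ and $B=S(G)$, using alternative $(ii)$. Fix an arbitrary character $\phi\in\Delta(S(G))$. What must be verified is that $\overline{A^{2}}=A$, that $\overline{A\cdot X}=X$ for the left $L^{1}(G)$-module action on $M(G)$, that $\overline{B\ker\phi}=\ker\phi$, and that $X=M(G)$ admits a right $\phi$-character; once these are in place, the conclusion that $T^{**}$ is not approximately biflat follows at once from Theorem \ref{tri}$(ii)$.

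For the structural hypotheses I would argue exactly as in the preceding two corollaries. Since $L^{1}(G)$ has a bounded approximate identity, $\overline{L^{1}(G)^{2}}=L^{1}(G)$. Regarding $M(G)$ as a Banach $L^{1}(G)$-module under convolution and applying Cohen's factorization theorem gives $\overline{L^{1}(G)\cdot M(G)}=M(G)$. Finally, every Segal algebra carries a left approximate identity, so $\overline{S(G)\ker\phi}=\ker\phi$ for every $\phi\in\Delta(S(G))$; in particular the hypothesis $\overline{B\ker\phi}=\ker\phi$ of Theorem \ref{tri} is satisfied.

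It remains to exhibit a right $\phi$-character on $X=M(G)$, and here I would mimic the construction used for $Tri(S(G),L^{1}(G),S(G))^{**}$. Since $S(G)$ is a dense left ideal of $L^{1}(G)$, \cite[Lemma 2.2]{alagh} extends $\phi$ to a character $\widehat{\phi}\in\Delta(L^{1}(G))$; as $L^{1}(G)$ is in turn a closed two-sided ideal of $M(G)$, the same extension procedure yields a character $\widetilde{\phi}\in\Delta(M(G))$ with $\widetilde{\phi}|_{L^{1}(G)}=\widehat{\phi}$, hence $\widetilde{\phi}|_{S(G)}=\phi$. Viewing the right action of $S(G)$ on $X=M(G)$ as convolution, for $x\in M(G)$ and $b\in S(G)\subseteq M(G)$ we obtain $\widetilde{\phi}(x\cdot b)=\widetilde{\phi}(x)\widetilde{\phi}(b)=\phi(b)\,\widetilde{\phi}(x)$, so the non-zero functional $\widetilde{\phi}\in M(G)^{*}$ is a right $\phi$-character on $X$. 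Thus Theorem \ref{tri}$(ii)$ applies and $Tri(L^{1}(G),M(G),S(G))^{**}$ is not approximately biflat. The only points needing genuine care are the two-step extension of $\phi$ to a character of $M(G)$ and the factorization identity $\overline{L^{1}(G)\cdot M(G)}=M(G)$; everything else is a verbatim transcription of the arguments already given for $Tri(S(G),L^{1}(G),S(G))$ and $Tri(L^{1}(G),S(G)\otimes_{p}S(G),S(G))$.
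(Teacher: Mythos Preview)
There is a genuine gap. Your claim that $\overline{L^{1}(G)\cdot M(G)}=M(G)$ is false for every non-discrete locally compact group $G$. Recall that $L^{1}(G)$ is a closed two-sided ideal of $M(G)$; hence $L^{1}(G)\ast M(G)\subseteq L^{1}(G)$, and in fact $L^{1}(G)\ast M(G)=L^{1}(G)$ since $\delta_{e}\in M(G)$. Thus $\overline{L^{1}(G)\cdot M(G)}=L^{1}(G)$, which is a proper closed subspace of $M(G)$ whenever $G$ is not discrete (for instance $\delta_{e}\notin L^{1}(G)$). Cohen's factorization theorem only asserts that $A\cdot X_{e}=X_{e}$ for the essential part $X_{e}=\overline{A\cdot X}$ of a Banach $A$-module $X$; it does not force $X_{e}=X$.

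Consequently the hypothesis $\overline{A\cdot X}=X$ of Theorem~\ref{tri} fails for $A=L^{1}(G)$ and $X=M(G)$, and the theorem cannot be invoked as you do. Worse, the underlying condition $\overline{T\ker\psi_{\phi}}=\ker\psi_{\phi}$ needed in Theorem~\ref{for tri} also breaks down: for $(a,x,b)\in T$ and $(a',x',b')\in\ker\psi_{\phi}$ the $X$-entry of the product is $a\cdot x'+x\cdot b'\in L^{1}(G)\ast M(G)+M(G)\ast\ker\phi\subseteq L^{1}(G)$, so $T\ker\psi_{\phi}$ sits inside $Tri(L^{1}(G),L^{1}(G),\ker\phi)$ and is not dense in $\ker\psi_{\phi}=Tri(L^{1}(G),M(G),\ker\phi)$. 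Your two-step extension of $\phi$ to a character of $M(G)$ and the resulting right $\phi$-character are fine, but without the density hypothesis the route through Theorem~\ref{tri} is blocked; as written the argument only covers discrete $G$, and a different approach (or an extra hypothesis) is needed in general.
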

%------------------------------------------------------------------------------------------------------------------------------------------
\begin{small}

\end{small}
\end{document}